\documentclass{amsart}
\usepackage{amsmath,amscd, amssymb}
\usepackage{caption}
\usepackage{color}
\usepackage{enumitem}
\usepackage[bookmarksnumbered=true]{hyperref}
\hypersetup{colorlinks = true, citecolor = blue}
\usepackage{natbib}
\usepackage{stmaryrd}
\usepackage{xcolor,colortbl}
\usepackage[utf8]{inputenc}
\usepackage{color}
\usepackage{soul}
\usepackage{float}
\usepackage{subcaption}
\usepackage{graphics}
\graphicspath{{Figures/}}
\usepackage[most]{tcolorbox}
\usepackage{lineno}
\usepackage{pgf,tikz}
\usetikzlibrary{arrows,calc,decorations.pathreplacing,angles,arrows.meta,positioning,shapes}
\usepackage{blkarray}

 \usepackage{mhchem} 

\newcommand{\BF}{\boldmath}
\usepackage{arydshln}

\usepackage{accents}
\newcommand*{\dt}[1]{%
  \accentset{\mbox{\large\bfseries .}}{#1}}
\newcommand*{\ddt}[1]{%
  \accentset{\mbox{\large\bfseries .\hspace{-0.25ex}.}}{#1}}
\newcommand*{\dddt}[1]{%
  \accentset{\mbox{\large\bfseries .\hspace{-0.25ex}.\hspace{-0.25ex}.}}{#1}}

\newcommand{\beq}{\begin{linenomath}\begin{equation}\begin{aligned}} 
\newcommand{\eeq}{\end{aligned}\end{equation}\end{linenomath}} 

\usepackage{graphicx}

\makeatletter
\newcommand{\R}{{\mathbb{R}}}
\newcommand*\bigcdot{\mathpalette\bigcdot@{.5}}
\newcommand*\bigcdot@[2]{\mathbin{\vcenter{\hbox{\scalebox{#2}{$\m@th#1\bullet$}}}}}
\makeatother

\usepackage{graphicx,scalerel}

\tikzset{bluenode/.style={draw,circle,scale=.8,fill=blue!40}}
\tikzset{rednode/.style={draw,circle,scale=.8pt,fill=red!50}}
\tikzset{whitenode/.style={draw,circle,scale=.8pt,fill=white!50}}


\makeatletter
\tikzset{circle split part fill/.style  args={#1,#2}{
 alias=tmp@name,
  postaction={%
    insert path={
     \pgfextra{%
     \pgfpointdiff{\pgfpointanchor{\pgf@node@name}{center}}%
                  {\pgfpointanchor{\pgf@node@name}{east}}%
     \pgfmathsetmacro\insiderad{\pgf@x}
       \fill[#1] (\pgf@node@name.base) ([xshift=-\pgflinewidth]\pgf@node@name.east) arc
                          (0:180:\insiderad-\pgflinewidth)--cycle;
      \fill[#2] (\pgf@node@name.base) ([xshift=\pgflinewidth]\pgf@node@name.west)  arc
                           (180:360:\insiderad-\pgflinewidth)--cycle;            
         }}}}}  
 \makeatother  
 \tikzset{bnecknode/.style={shape=circle split,
    draw=white!40,dashed,
    line width=.0000000001mm,text=black,font=\bfseries,minimum size=.7cm,
    circle split part fill={blue!50,red!30}}}

\usetikzlibrary{shapes}

\newtheorem{theorem}{Theorem}

\newtheorem{cor}[theorem]{Corollary}
\newtheorem{defn}[theorem]{Definition}

\newtheorem{lemma}[theorem]{Lemma}
\newtheorem{Fact}[theorem]{Fact}
\newtheorem{prop}[theorem]{Proposition}
\numberwithin{equation}{section}
\newtheorem{example}{Example}

\definecolor{jim}{rgb}{1, .5, 0.0}

\definecolor{jim2}{rgb}{.6,.4,0}

\definecolor{Gray}{gray}{0.85}
\definecolor{dgn}{rgb}{0.0, 0.5, 0.0}
\newcommand{\black}{\color{black}{}}

\newcommand{\Supp}{\mathrm{supp}}
\newcommand{\ie}{\textit{i.e.}}

\newcommand{\dis}{{\displaystyle}}
\newcommand{\cH}{{\mathcal H}}
\newcommand{\cK}{\mathcal K}

\newcommand{\cP}{{\mathcal P^{(>0)}}}

\newcommand{\bcP}{{\mathcal P^{(\ge 0)}}}
\newcommand{\cQ}{\mathcal Q_{cc}}
\newcommand{\cT}{\mathcal T}
\newcommand{\SLV}{S_{LV}}
\newcommand{\SZ}{S_Z}

\DeclareSymbolFont{boldoperators}{OT1}{cmr}{bx}{n}
\SetSymbolFont{boldoperators}{bold}{OT1}{cmr}{bx}{n}
\edef\bar{\unexpanded{\protect\mathaccentV{bar}}\number\symboldoperators16}

\usepackage{tcolorbox}   
\newtcbox{\HL}[1][green]{on line, arc=7pt,colback=#1!10!white,colframe=#1!50!black,
  before upper={\rule[-3pt]{0pt}{10pt}},boxrule=1pt, boxsep=0pt,left=6pt,
  right=6pt,top=2pt,bottom=2pt}


\providecommand{\customgenericname}{}
\newcommand{\newcustomtheorem}[2]{%
  \newenvironment{#1}[1]
  {%
   \renewcommand\customgenericname{#2}%
   \renewcommand\theinnercustomgeneric{##1}%
   \innercustomgeneric
  }
  {\endinnercustomgeneric}
}

\newcustomtheorem{customthm}{Theorem}
\newcustomtheorem{customlemma}{Lemma}

\definecolor{COM}{rgb}{0.4,0,0.4}

\definecolor{Ng}{rgb}{.5,0.15,.02}

\usepackage[normalem]{ulem}
\usepackage{babel}
\usepackage{lipsum}
\makeatletter
\g@addto@macro{\endabstract}{\@setabstract}
\newcommand{\authorfootnotes}{\renewcommand\thefootnote{\@fnsymbol\c@footnote}}%
\makeatother

\begin{document}
\title[A team of Die-out Lyapunov functions]
{{
Extinction of multiple populations and a team of Die-out Lyapunov functions}
}
\maketitle
\begin{center}
\authorfootnotes
  Naghmeh Akhavan\textsuperscript{1}, James A. Yorke\textsuperscript{2} \par \bigskip
  \textsuperscript{1} Department of Mathematics and Statistics, \ University of Maryland, Baltimore County, MD \par
  \textsuperscript{2} Departments of Mathematics and Physics, Institute for
Physical Science and Technology, University of Maryland,
College Park, MD \par \bigskip

\end{center}
\footnotetext[1]{nakhava1@umbc.edu}
\footnotetext[2]{yorke@umd.edu}
 
\maketitle
\begin{abstract}
The extinction of species is a major problem of concern with a large literature.
Our investigation gives insight into when species extinctions must occur, with an emphasis on determining which species might possibly die out and on how fast they die out.
We investigate a differential equations model for population interactions with the goal of determining when several species (\ie, coordinates of a bounded solution) must die out or ``go extinct'' and must do so exponentially fast. Typically each coordinate represents the population density of a different species.
For our main tool, we create what we call  ``die-out'' Lyapunov functions. A given system may have several or many such functions, each of which is a function of a different set of coordinates. 
That die-out function implies that one of the species in its subset must die out exponentially fast -- for almost every choice of coefficients of the system. 
We create a ``team'' of die-out functions that work together to show that $k$ species must die,  where $k$ is determined separately.
Secondly, we present a ``trophic'' condition for generalized Lotka-Volterra systems that guarantees that there is a trapping region that is globally attracting. That implies that all solutions are bounded.
\end{abstract}

\maketitle

\section{Introduction}

In this paper, we investigate two questions and two systems. First, we ask if we can tell when all solutions of the following (generalized) Lotka-Volterra are bounded. We show, under natural ``trophic'' assumptions below, that all solutions are bounded (and wind up in a bounded trapping region). Here the equations are: 

\begin{equation}\label{eq:LV}
  x_i'  = x_i \Big(c_i +\sum_{j=1}^{d} s_{ij}x_j\Big),\quad \text{where } i=1,\ldots,d, \text{ and }x_i' = \frac{d}{dt}{x_i},\ x_i\ge 0.
\end{equation}
Write $X:=(x_1,\ldots,x_d), C:=(c_1,\ldots,c_d)$ and $S:=(s_{ij})$. Then we can rewrite the equation, using an inner product, as
\begin{equation}\label{eq:LV1}
X' = X\cdot\Big(C + SX\Big)
\end{equation}
\begin{defn}\label{trophic-def}
We say a system Eq.~\eqref{eq:LV} (or the matrix $S:=(s_{ij})$) is {\bf trophic} if $S$ and $C:=(c_1,\ldots,c_d)$ satisfy the following two hypotheses.

\noindent
\begin{enumerate}
    \item [(T1)]  For all $n$ if $c_n \ge 0$, then $s_{nn}<0$.
    \item [(T2)]  For each pair $n,m$,
    if $s_{nm} >0$, then $n>m$ and furthermore $s_{mn}<0$.
\end{enumerate}
\end{defn}
\bigskip
An interpretation of  (T2) is that if species n benefits from species $m$, then $n>m$ and species $n$ must negatively affect the population density of species $m$. It also implies that $s_{mm}\le 0$ for all $m$, so that species can be self-limiting, \ie, $s_{mm}< 0$. We created this definition to reflect the term trophic and imply the existence of a trapping region. The reader may be able to find a more inclusive term for which Theorem \ref{thm:trap} below remains true.

 If {\it{(T1)}} is false, then $c_n>0$ and $s_{nn} \ge 0$ for some $n$. 
 Hence, if $X(t)$ is a solution with $x_m(t) \equiv 0$ for all $m \neq n$, then $x_n(t) \to \infty$ as $t \to \infty$.
Condition {\it{(T2)}} implies $s_{nn}$ can not be positive since  $s_{nm} > 0$ implies $m \neq n$. 

\begin{defn}
Let 
$\bcP$ be the set of points whose coordinates are non-negative, \ie, the domain of our differential equations,
 $ \{(x_1, \ldots, x_d):  x_i \ge0\}$.
We say $\Gamma$ is a {\textbf{globally attracting trapping region}} (for $\bcP$) if $\Gamma$ is bounded and every trajec\-tory $X(t)$ in $\bcP$ is eventually in $\Gamma$ and if $X(t_0) \in \Gamma$, then $X(t) \in \Gamma$ for all $t \ge t_0$, \cite{meiss2007differential}. 
\end{defn}

\begin{customthm}{1}
[\bf{``Trophic'' implies existence of a bounded globally attracting trapping region}]\label{thm:trap}
Assume the system Eq.~\eqref{eq:LV} is trophic; (see Def.~\ref{trophic-def}).
Then \eqref{eq:LV} has a bounded globally attracting trapping  region.
\end{customthm}

The proof of Theorem~\ref{thm:trap} is in section \ref{sec:trapping region}.
Now that we have a way of guaranteeing that some systems have bounded solutions, we address a more general family, and we ask when we can show that a bounded solution has species (\ie, coordinates that must die out.

\bigskip

There is an old ``competitive exclusion principle'', a rule of thumb for ordinary differential equations models of an ecosystem (\cite{cushing2004some}, \cite{yoon2021global}, \cite{yang2021estimating}), that if $d$ of species depend on $d'$ resources  when $d>d'$, there is no positive attracting steady state, \cite{levin1970community}.
 
Does that mean that $d-d'$ species die out? We explore situations where species must die out and must do so exponentially fast. Our results extend to cases where $d'$ is not less than $d$.
 Our results have implications well beyond ecosystems. See for example the extensive discussion of applications in \cite{li2010global}.

{\bf Our (non-autonomous) ``island" or ``resource" or} main model.
(We sometimes write ``$:=$'' instead of ``$=$'' in definitions).
Write $Z := (z_1, \ldots, z_{d'})\in\R^{d'}$,  and let $\SZ$ or $S = (s_{ij})$ be a 
$d \times d'$ matrix where $d,d'$ are positive integers. In particular, we may have $d' < d$, $d'=d$, or $d'>d$. When $d'<d$, For almost every $C$, each bounded solution $X(t)$ will have at least one species dying out, but some species may also die out in the other cases, depending upon the choice of matrix $S$. Our generalization of Eqs. \eqref{eq:Z} and \eqref{eq:Z1} are
\begin{equation}\label{eq:Z}
  x_i'(t) 
  = x_i(t)\Big(c_i +\sum_{j=1}^{d'} s_{ij}z_j(t)\Big),\quad \text{where } i=1,\ldots,d;
\end{equation}
where
$z_j(t)$ is a continuous function for each $j$, and each $z_j(t)$ may depend on $X(t)$ or even on $X(t-\tau_j)$ where the $\tau_j$ are time delays.  We can rewrite the system of equations as
\begin{equation}\label{eq:Z1}
X' = X\cdot\Big(C + SZ\Big)
\end{equation}

 Each $x_i$ can be viewed as the population density of the $i^{th}$ species.
Ecologists often refer to $z_j$'s as the ``resources'' that the populations $x_i$ depend upon. The $c_i$'s are constant per capita growth or death rates.

{\bf Species on an island.}
While Equations of the form \eqref{eq:Z1} might occur in many circumstances, most simply $X(t)$ can be viewed as a collection of species on an island, and $Z(t)$ represents resources that ocean waves bring to the island--to be consumed by the $X$ species. The $Z$ species might also include migrating flying predators that seasonally attack some of the $X$ species.

{\black  \cite{smale1976differential} showed that Lotka–Volterra systems that have five or more species could exhibit any asymptotic behavior, including a fixed point, a limit cycle, an n-torus, or attractors, and our system is even more general.}

Our main result says that if $X(t)$ is a bounded solution of Eq.~\eqref{eq:Z} and the dimension of the kernel of $S^T$ is $k$ where $k>0$, then $k$ ``species'' or coordinates must die out exponentially fast. 

Suppose we had what might appear to be a more general form of Eq.~\eqref{eq:Z} where we replace $z_j(t)$ by $\hat z_j (X(t), X(t-\tau_j), t)$, {\black where $\tau_j$ is a time delay}. Suppose also $X(t)$ was a solution of this new equation. Then $X(t)$ would also be a solution of the original Eq.~\eqref{eq:Z} after we set $z_j(t) = \hat z_j(X(t), X(t-\tau_j),t)$ using the given $X(t)$.  

{\black For a given solution $X(t)$, }we say {\BF \bf $k$ ``species'' (or coordinates of $X(t)$)
die out exponentially fast} if there are constants $a$ and $b$ ($b>0$) such that for each $t \ge 0$ there are at least $k$ choices of $i$ for which $x_i(t) \le e^{a-bt}$. Which $x_i$'s are small may depend upon $t$. We can imagine that there is a minimum threshold level of population density below which $x_i$ cannot go and later recover, below which $x_i(t)$ will go to $0$.

Let $k>0$ be the dimension of the kernel of $S^T$, the transpose of $S$.
Assume there is a solution $X(t) := (x_1, \ldots, x_d)(t)$ where all $x_i(t)$ are bounded for $t \ge 0$. 
Theorem~\ref{thm:DieOut1} addresses the following questions: 
\begin{enumerate}
\item Must some $x_i$  die out?   (Yes, since the kernel dimension $k$ is greater than zero.)
\item What is the minimum number of $x_i$ that must die out? (The answer is $k$ for almost every choice of $C$.)
\item Must the above $k$ $x_i$  die out exponentially fast? (The answer is almost certainly yes! Establishing that is the main objective of this paper.)
\item Can we tell which species must die out? (The answer depends upon $C$ and $S$, and sometimes is yes, sometimes no.)
\end{enumerate}

Our focus on the kernel dimension $k$ is motivated by  
\cite{jahedi2022global,YorkeSaureJahedi}, which investigates 
 nonlinear equations of the form $F(x) = C$ for $F:\R^N\to\R^M$, where the dimension of the kernel of $DF(x)$ is at least $k>0$ for all $x$. They 
use $k$ to determine 
nature of the sets of solutions. When these equations represent steady states of ecosystems, they argue that only $N-k$ populations can coexist with positive values in a steady state.

\begin{defn}\label{def dieout}
We will say {\BF\bf
Eq. \eqref{eq:Z} has a bounded solution $X$}
when $X(t)$
satisfies Eq. \eqref{eq:Z} 
for some function $Z(t)$ for all $t\ge0$, 
and is bounded with bound $\beta$, \ie,
 $x_i(t) \le \beta$ for all $i \in \{1, \ldots, d\}$ and $t\ge0$).

We say {\bf\BF $k>0$ species  (or coordinates of $X(t)$)} die out exponen\-tially fast if
there exist $a^*,b^*\in \R$ where $b^*>0$ such that for each $t$ there are at least k choices of $i$ for which
\begin{align}\label{ineq ab thm}
   x_i(t)\le e^{a^* - b^*t}.
\end{align}
\end{defn}

\bigskip
\noindent
Which $k$ species are small and satisfy \eqref{ineq ab thm} at time $t$ may vary with time so that no species $x_i$ satisfies $x_i(t) \to 0$ as $t \to \infty$; see Figure~\ref{fig:3 half planes}(D) of Example \ref{ex specific}. Heuristically, we can expect that eventually, those species must be so small that they die out and become 0.

\begin{customthm}{2}[{\bf\BF $k$ species die out}]\label{thm:DieOut1}
Assume $k:=$ dimension of the kernel of $S^T$ satisfies $k>0$.  
Then the following holds for almost every $C$:\\
if Eq.
\eqref{eq:Z} has a bounded solution $X$, 
then at least $k$ species (or coordinates of $X(t))$ die out and do so exponentially fast.
\end{customthm}
\black

The
Lotka-Volterra population model Eq.~\eqref{eq:LV} that occurs in many fields of science and engineering (\cite{gause1932experimental}, \cite{grover1997resource})
is a special case of Eq.~\eqref{eq:Z}.

That it is a special case  can be seen by substituting  variables $z_j$ 
for $x_j$ on the right-hand side of Eq.~\eqref{eq:Z}.
Then if $X(t)$ is a solution of Eq.~\eqref{eq:LV} 
and we set $z_j(t)=x_j(t)$ for all $j$, $X(t)$ will be a solution of Eq.~\eqref{eq:Z}, now with $d'=d$.

Eq.~\eqref {eq:Z},
allows us to focus on particular coordinates of the dynamics that are primarily responsible for some coordinates dying out, so its matrix
$\SZ$ is a submatrix of $\SLV$ in the more limited model, Eq.~\eqref{eq:LV}.

One criticism of our model could be the over simplicity of the linear interactions between species or coordinates as incorporated by the matrix $s_{ij}$. 
There is a large literature in which the interactions between species are modeled in more complex ways, \cite{dubey2004persistence}.
We view our paper as an investigation of the nature of die-out functions more than of ecology and as pure mathematics aimed at the applied scientist. By keeping interactions simple, we hope the model and the methods might be applicable to situations throughout the sciences.
We believe this technique of finding multiple die-out Lyapunov functions will have applications far beyond ecology. 

\begin{figure} 
 \begin{center}
\centering
\resizebox{12.5cm}{!}{%
 \begin{tikzpicture}
[xshift=-3cm,ultra thick,node distance=1cm]
  \node[bluenode](c1)[xshift=-8.9cm]{$11$};
    \node[bluenode](c2)[xshift=-7cm]{$10$};
      \node[bluenode](c3)[xshift=-5cm]{$9$};
        \node[bluenode](c4)[xshift=-3cm]{$8$};
   \node[bluenode](c5)[]{$7$};
  \node[bluenode](c6)[xshift=+2cm]{$6$};
   \node[bluenode](c7)[xshift=+4cm]{$5$};
    \node[bluenode](c8)[xshift=+6cm]{${4}$};
\node[whitenode](r0)[xshift=-0.5cm,yshift=+5cm]{$14$};
\node[rednode](r1)[xshift=+2cm,yshift=+4cm]{$13$};
\node[rednode](r2)[xshift=-2cm,yshift=3cm]{$12$};
\node[rednode](r3)[xshift=-5.4cm,yshift=-3cm]{${3}$};    
\node[rednode](r4)[xshift=0cm,yshift=-4cm]{${2}$};    
\node[rednode](r5)[xshift=5cm,yshift=-5cm]{${1}$};
\draw[<->,black] (r0)-- (r2);
\draw[<->,black] (r0)-- (r1);
\draw[<->,black] (r1)-- (r3);
\draw[<->,black] (r1)-- (r4);
\draw[<->,black] (r1)-- (r5);
\draw[<->,black] (r1)-- (c1);
\draw[<->,black] (r1)-- (c2);
\draw[<->,black] (r1)-- (c3);
\draw[<->,black] (r1)-- (c4);
\draw[<->,black] (r1)-- (c5);
\draw[<->,black] (r1)-- (c6);
\draw[<->,black] (r1)-- (c7);
\draw[<->,black] (r1)-- (c8);
\draw[<->,black] (r2)-- (r1);
\draw[<->,black] (r2)-- (r3);
\draw[<->,black] (r2)-- (r4);
\draw[<->,black] (r2)-- (r5);
\draw[<->,black] (r2)-- (c1);
\draw[<->,black] (r2)-- (c2);
\draw[<->,black] (r2)-- (c3);
\draw[<->,black] (r2)-- (c4);
\draw[<->,black] (r2)-- (c5);
\draw[<->,black] (r2)-- (c6);
\draw[<->,black] (r2)-- (c7);
\draw[<->,black] (r2)-- (c8);
\draw[<->,black] (r3)-- (r4);
\draw[<->,black] (r3)-- (c1);
\draw[<->,black] (r3)-- (c2);
\draw[<->,black] (r3)-- (c3);
\draw[<->,black] (r3)-- (c4);
\draw[<->,black] (r3)-- (c5);
\draw[<->,black] (r3)-- (c6);
\draw[<->,black] (r3)-- (c7);
\draw[<->,black] (r3)-- (c8);
\draw[<->,black] (r4)-- (r5);
\draw[<->,black] (r4)-- (c1);
\draw[<->,black] (r4)-- (c2);
\draw[<->,black] (r4)-- (c3);
\draw[<->,black] (r4)-- (c4);
\draw[<->,black] (r4)-- (c5);
\draw[<->,black] (r4)-- (c6);
\draw[<->,black] (r4)-- (c7);
\draw[<->,black] (r4)-- (c8);
\draw[<->,black] (r5)-- (c1);
\draw[<->,black] (r5)-- (c2);
\draw[<->,black] (r5)-- (c3);
\draw[<->,black] (r5)-- (c4);
\draw[<->,black] (r5)-- (c5);
\draw[<->,black] (r5)-- (c6);
\draw[<->,black] (r5)-- (c7);
\draw[<->,black] (r5)-- (c8);
\draw[->,black](r3)edge[in=-20,out=60,loop below]node[below right]{}();
\draw[->,black](r4)edge[in=-20,out=60,loop below]node[below right]{}();
\draw[->,black](r5)edge[in=-20,out=60,loop below]node[below right]{}();
\draw[->,black](r1)edge[in=-20,out=60,loop above]node[above right]{}();
\draw[->,black](r2)edge[in=-20,out=60,loop above]node[above right]{}();
\draw[->,black](r0)edge[in=-20,out=60,loop above]node[above right]{}();
                \node[](s)[xshift=-8cm,yshift=5cm,label={[xshift=-0.cm, yshift=0cm]{{\textbf{S}}}}]{};    
                \node[](s0)[xshift=-8cm,yshift=-5cm]{};
                
\draw[->,black] (s0)-- (s) node[xshift=-0.5cm, yshift=-0.2cm,label={[xshift=-0.1cm, yshift=-9.3cm]\LARGE1},label={[xshift=-0.1cm, yshift=-8.4cm]\LARGE2},label={[xshift=-0.1cm, yshift=-7.6cm]\LARGE3},label={[xshift=-0.1cm, yshift=-5.2cm]\LARGE4},label={[xshift=-0.1cm, yshift=-3cm]\LARGE5},label={[xshift=-0.1cm, yshift=-2cm]\LARGE6},label={[xshift=-0.1cm, yshift=-1cm]\LARGE7}]{};
\end{tikzpicture}
}%
 \caption{{\textbf{A system with $14$ species where three must die out.}}
  There are $8$ blue nodes,  numbers 4-11, that have connections only with the $5$ nodes (colored red), numbers 1,2,3,12, and 13.
  The numbers on the left indicate the trophic levels. Imagine that the higher levels species are eating the lower-level ones if there is an edge connecting them. Under certain conditions including the form of the equations, Theorem~\ref {thm:trap}  shows there must be a globally attracting trapping region, andTheorem~\ref{thm:DieOut1} shows that for almost every choice of coefficients in the corresponding Lotka-Volterra system, at least three of the blue-node species must die out simultaneously, exponentially fast. For more discussion, see Example~\ref{sec:example 3}.
}
 \label{fg:14-trophic}
\end{center}
\end{figure}
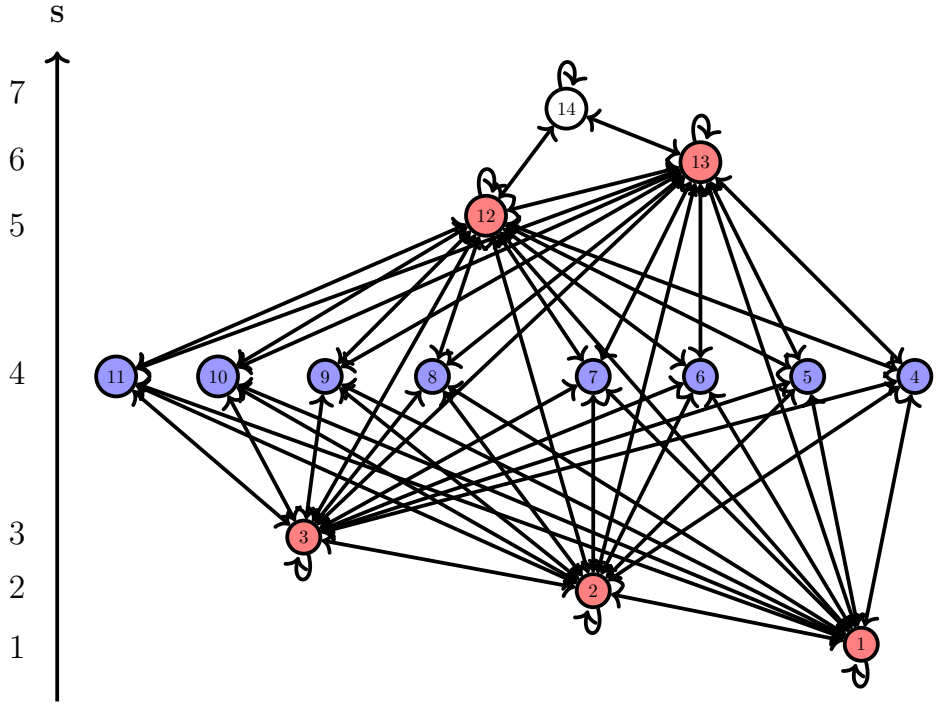

{\bf  Figure~\ref{fg:14-trophic} represents a 14-species ecosystem.} 
Ecologists might say they don't see systems like this in nature. The reason is given by Theorem \ref{thm:DieOut1}: It suggests that at least three species will die out and they will die out fast.
This figure will help make it clear what the above theorems are saying.
Each node represents a different species. We write $x_i$ for the population density of species $i$.
If there is a directed edge from species $j$ to species $i$, then the the logarithmic rate of change of species $i$ depends directly on species $j$; \ie, 
\beq
 x_i'(t) 
  = x_i(t)f_i(\ldots,x_j,\ldots),
\eeq
where $f_i$ depends on some of the species densities (including species $j$ since there is an edge from $j$ to $i$) but not necessarily all of the species densities. If there is no such edge, then $s_{ij} =0$.
For such a system, Eq.~\eqref{eq:LV1}
is 14 dimensional, $d = 14$. In contrast, Eq.~\eqref{eq:Z1} has dimension $d=8$ and $d'=5$. The vector $X$ includes only the 8 blue nodes and $Z$ includes only the 5 red nodes. 
Then the $5\times8$ matrix $S^T$ has a kernel of at least dimension 3. Hence by the Theorem~\ref{thm:DieOut1}, we can expect that 3 species must die out. Since the image of a subspace of dimension 8 has an image of dimension 5, the kernel must have at least dimension 3, and the same is true for the full 14-dimensional space.

{\bf A nonautonomous Eq.~\eqref{eq:Z1}}. If we assume that the system has the form  Eq.~\eqref{eq:LV}, 
then it can be modified in many ways so long as the eight equations of Eq.~\eqref{eq:Z1} are unchanged and there is still a bounded solution.
For example for $i$ that is not one of the blue nodes, the growth rate $c_i$ can be made a function of time. For example $c_i(t)$ could reflect the rainfall available to plants. Then the system would be non-autonomous. As long as the 8-dimensional vector $C$ and the $8$ by $5$ matrix is unchanged, the result does not depend on how $Z(t)$ is generated.

\section{Introduction to our generalized Lyapunov functions}

Usually, a Lyapunov function $V(X)$ is only used to establish the stability of a steady state, but that is not our approach.
We use the term {\bf Lyapunov function} for any real-valued function $V(X)$ for which $V(X(t))$ is monotonically decreasing in some specified regions.

For the $4$-dimensional Lotka-Volterra model in Fig.~\ref{fig:3preds1prey-dynamics}, {\black we will show that $k=2$ and that there are three die-out Lyapunov functions (see Eqs.~\eqref{eq:34}-\eqref{eq:32}). }

 Suppose Fig.~\ref{fg:4-troph c} represents Lotka-Volterra system for which all species are bounded for $t \ge 0$.
Then at least two species must die out exponentially fast. 
The species dying out are from species numbers $2,3$, and $4$. 

Fig.~\ref{fg:14-trophic} (Example \ref{sec:example 3}), is a more complicated example. It is sometimes necessary to look at more than $k$ Lyapunov functions. There, we examine $28$ Lyapunov functions to show three species must die out. 

Section~\ref{sec examples} provides several examples.
{\black Next we address the existence of a bounded solution. }

The ideas needed for the proof of Theorem~\ref{thm:DieOut1} are sufficiently novel that we have chosen to introduce these ideas here rather than to jump directly into the proof.

{\bf\BF Defining $\dt{V}$ for a real-valued function $V$.}
Let $U$ be an open set in $\R^d$.
Assume $F:U\to\R^d$ and $V: U \to \mathbb R$ are $C^1$.
For the differential equation
  \begin{align}\label{DE}
     \frac{dX}{dt} = F(X),
 \end{align}
 define 
\begin{align}\label{dotV}
\dt V(X) :=\nabla V(X) \cdot F(X) 
\text{ so that } \frac{d}{dt}V(X(t)) =  \dt V(X(t)).
\end{align}
Hence  $\dt{V}(X)$ tells how fast $V(X(t))$ changes when the trajectory is passing through a point $X$  -- without the user  knowing the actual solution $X(t)$. Of course if $F$ depends on $X$ and $t$, $\dt V(X,t) =\nabla V(X) \cdot F(X,t)$.

\begin{defn}\label{def Die-out}
Let $V:\cP \to\R$.
In this paper, we will say a real-valued function $V$ is a {\bf{die-out Lyapunov function}} if $\dt V(X) <0$ on $\cP$.
\end{defn}

\begin{defn}[The form of our die-out functions]\label{def:nuS}
A vector $\nu$ is a {\textbf{{null vector of the transpose matrix $S^T$}}} if $S^T \nu^T =0$, or equivalently $\nu S =0$. We define the function $\Lambda_{\nu} (X)$ on $\cP$, as follows
\begin{align}\label{lyap}
    \Lambda_{\nu} (X) &:= \nu\bigcdot \ln X,
\end{align}
\ie, the inner product of $\nu := (\nu_1, \ldots, \nu_d)$ with $\ln X := (\ln x_1, \ldots, \ln x_d)$. 
\end{defn}
From \eqref{eq:Z1}, it has the property that  
\beq\label{nu dot C}
\dt \Lambda_\nu(X)=\nu\cdot (\frac{x_1'}{x_1},\ldots,\frac{x_d'}{x_d})= \nu\cdot (C+SZ)
= \nu\cdot C+\nu SZ = \nu\cdot C
\eeq
since $\nu S = 0$. 
Hence $\dt \Lambda_\nu(X)$ is constant for Eq.~\eqref{eq:Z} --- and for its special case Eq.~\eqref{eq:LV}. If that constant is negative, then it is a die-out function. 

\begin{example}\normalfont
Suppose three species depend on two resources, and those three only interact with the two resources. We might refer to their population densities as $x_1,x_2, x_3$ and $z_1,z_2$, respectively. They might be part of a much bigger ecosystem, but we can focus on this subset. Suppose further that $S$, restricted to the domain $x_1,x_2, x_3$, has the form:
$$S := \left[
\begin{array}{cc}
      1 & 2 \\
      1 & 1 \\
      3 & 1 
    \end{array}\right].$$
Its kernel is one-dimensional, and it has a null vector $\nu:= \left[2,-5,1 \right]$ of $S^T$. Then 
\begin{align}\label{intro null vec}
\Lambda_\nu(X) := 2\ln x_1 -5 \ln x_2 +\ln x_3 \text{ \ on }  \cP,
\end{align}
(which appears in Example~\ref{ex specific})
and assume (as will be the case in this paper) that $C := \left[
\begin{array}{c}
      -1  \\
      1  \\
      -1  
    \end{array}\right]$. Then $\nu\cdot C <0$. In particular,
\begin{align*}
\dt \Lambda_\nu(X) = \nu \bigcdot C = -8 \ne0 \text{ \ on }  \cP.
\end{align*}
Then for any 
bounded solution $X(t)$  of Eq.~\eqref {eq:Z}, either species/coordinates 1 or 3 must  be dying out at each $t$ because {\bf\BF these are the coordinates for which $\nu_i>0$}. Other species might also die out as well.

This example is a special case of Example \ref{ex specific} in Sec.~\ref{sec examples}.  There we show that while $\min \{x_1(t), x_3(t)\} \to 0$, it is possible to choose $Z(t)$ such that neither $x_1$ nor $x_3$ goes to $0$ as $t \to \infty$. 

Now follow some technical details as to why ``$\min \to 0$".
Since $X(t)$ is bounded by $\beta$, \ie, $x_i(t) \le \beta$ for all $t>0$ and {\black all $i$. Since} the above constant $\nu \bigcdot C$ is negative,  $\Lambda_\nu(X(t))\to -\infty$ as $t\to\infty$. The term $-5\ln x_2$ is bounded from below, since $-5\ln x_2 \ge -5 \ln \beta$. Hence, $2\ln x_1 + \ln x_3 \to -\infty$, so $\min \{\ln x_1, \ln x_3\} \to -\infty$, and equivalently,
\beq\label{must die}
\min\{x_1(t),x_3(t)\}\to 0  \text{ as } t\to\infty.
\eeq
For a different $C$, we can have $\nu\bigcdot C >0$, in which case, then $\Lambda_\nu(X(t))\to+\infty$ as $t\to\infty$ which implies
$$x_2(t)\to 0  \text{ as } t\to\infty.$$
\end{example}
{\bf Teams of die-out Lyapunov functions. }
The above argument can be used to produce a die-out function whenever $m+1$ species depend on $m$ (for almost every choice of $C$ and $S$). 

Now suppose there is an ecosystem with 14 species as in Fig~\ref{fg:14-trophic}. Each node represents a species. There is a connection from node $j$ to node $i$ if $\frac{x_i'}{x_i}$ is a function of $x_j$. There is a group of 5 species (colored red) and a group of 8 species (colored blue), each of which interacts directly only with the 5 reds. That is, each edge from a blue node connects only with red nodes. The red nodes have no limitations as to what they connect with. Choose any 6 of the 8.
Then these 6 depend only on the 5, and we can construct a die-out Lyapunov function that depends only on those 6 $x$ variables. We conclude that at least one must die out (exponentially fast). But there are 28 ways to choose 6 species out of 8. Hence there are 28 die-out Lyapunov functions. This seems like a complicated mess. Our proof of Theorem~\ref{thm:DieOut1} shows that at least 3 species ($8-5$) must die out exponentially fast. 

In this paper, each example may have several such functions $V$, each depending on a different subset of $x_i$. Each makes a different statement about which species must die out. To understand the entire picture, many or all of these die-out Lyapunov functions must be considered together.
We call the collection of these a ``team of Lyapunov functions''.
While each $V$ guarantees some coordinate(s) $x_i$ are going to 0 in some sense, the team will allow us to conclude that several are going to 0 in some sense, simultaneously. 
Our emphasis is on detecting how many are going to 0.

{\bf\BF When $\nu \bigcdot C =0$.} When several species such as rabbits, deer, and cows are feeding on the same resources such as grasses and weeds, they can coexist if their per capita growth rates are ``perfectly balanced'' with $S$. That condition is that $\nu \bigcdot C =0$ for each null vector $\nu$ (see corollary~\ref{cor1}).

The Discussion section, Sec.~\ref{sec discussion}, provides additional insights into the themes of the paper.
\begin{figure}[ht]
\begin{center}
\centering
 \begin{tikzpicture}
[xshift=-3cm,ultra thick,node distance=1cm]
  \node[bluenode](c1)[xshift=-2cm]{$x_2$};
  \node[bluenode](c2)[]{$x_3$};
\node[bluenode](c3)[xshift=+2cm]{$x_4$};
\node[rednode](r1)[xshift=0cm,yshift=-2cm]{$x_1$};
\draw[<->,black] (r1)-- (c1);
\draw[<->,black] (r1)-- (c3);
\draw[<->,black] (r1)-- (c2);
\draw[->,black](r1)edge[in=-20,out=60,loop below]node[below right]{}();

   \node[]at(5,-.8){ $S =\left[
\begin{array}{ccccc}
      s_{11} & s_{12} & s_{13} & s_{14} \\
      s_{21} & 0 & 0 & 0 \\
      s_{31} & 0 & 0 & 0 \\
      s_{41} & 0 & 0 & 0
    \end{array}\right]$};
    \end{tikzpicture}
\end{center}
 \caption{{\bf Three predators and one prey.} See Example~\ref{ex:4D} {\black below in Section \ref{sec examples} for more details}.
In our figures, each node represents the population density of a species.
The existence of an edge from node $j$ to node $i$ means species $j$ directly influences species $i$, \ie, $s_{ij}\ne 0$. 
Some species $i$ here may have ``self-influence'' meaning $s_{ii}\ne 0$, indicated by an edge going from its node back to its node.
An edge between two nodes with an arrow at each end means each of the two directly influences the other.
The graph implies $S$ has the form shown. 
 }
\label{fg:4-troph c}
 \end{figure}
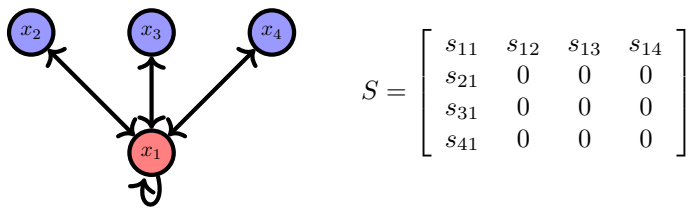
\begin{figure}[ht]
\begin{subfigure}{.5\linewidth}
\centering
\includegraphics[width=1.\linewidth]{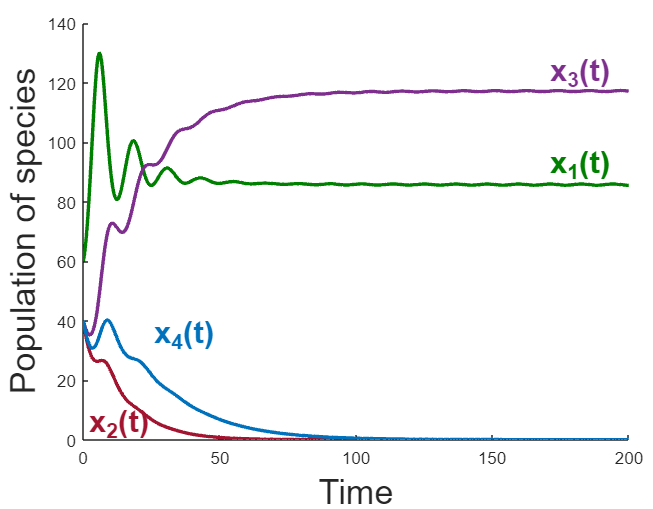}
\caption{}
\end{subfigure}
\begin{subfigure}{.49\linewidth}
\centering
\includegraphics[width=1.\linewidth]{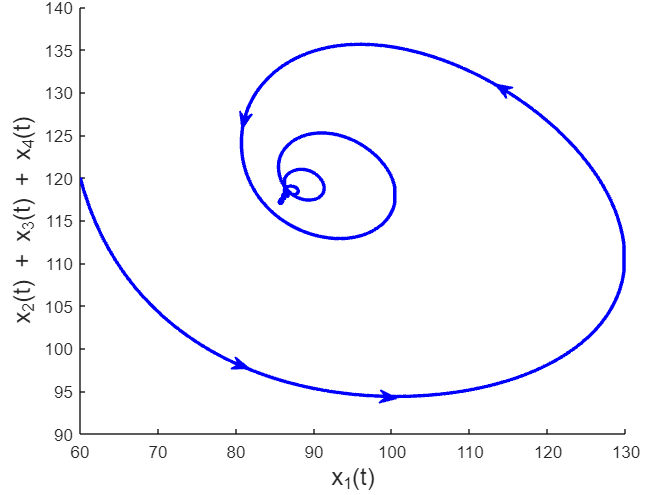}
\caption{}
\end{subfigure}\\[1ex]
\caption{
{Populations of three predators and one prey are plotted. See Example~\ref{ex:4D}.
 Parameters are given in table \eqref{table:Symbol2}.
}
In this case, only one predator and the prey survive and tend to the steady state. 
Two predators die out.
In this case, which survives can sometimes be determined from the null vectors of $S^T$, as we shall show. 
}
 \label{fig:3preds1prey-dynamics}
\end{figure}
\section{Background}
\label{sec:background}

See Sec.~\ref{sec:background} for a discussion of the origins of our methods.

This paper is motivated by connecting the ideas of two papers, \cite{jahedi2022global,jahedi2022robustness}, which apply null vectors in their generalized competitive exclusion principle.
It is also motivated by \cite{akhavan2020population} in which a Lyapunov-like function of the form $V := \frac{x_1}{x_2}$ was used to show one species must die out in a population model. 

\cite{mcgehee1977some} and   \cite{armstrong1980competitive} investigate Lotka-Volterra 
systems Eq.~\eqref{eq:Z} 
and more general autonomous systems for which they show, the answer to question (1) is yes. They show that the limit set of a trajectory $X(t)$ as $t\to\infty$ cannot contain any points $X_0\in\cP$.
Hence if $X_0$ is a limit point of $X(t)$, some coordinate of $X_0$ must be zero, {\ie, $X_0 \notin \mathcal P$.}

If we add  to the results of \cite{mcgehee1977some} the assumption that there is a bounded trajectory, then their results imply that at least one specie dies out ---
in the sense that $\min_i x_i(t) \to0$ as $t\to\infty$. As time passes, at each time $t$, for at least one coordinate $i$, $x_i(t)$ is small, but which species is small may vary from time to time. 
Nonetheless, one can argue that in practice, eventually at least, one specie density is so small that it cannot recover and it dies out.
{\black They conclude only that {\black eventually,} at least one species dies out and provide no information about the speed of the die-off.
Our Props.~\ref{prop1} and \ref{prop V double dot} have a similar spirit and are included here in part to relate our results to theirs.}

To give the reader the spirit of our methods in attacking the above questions, we present the following result. 
Notice that this result makes no assumptions about what values $V$ assumes. $V$ does not have to take on only non-negative values.

 \begin{prop}\label{prop1} 
Assume there exists a $C^1$ differential equation \eqref{DE} where $F$ is defined on
an open set $U$ and $F$ is $C^1$. Assume \\
($A_1$) there is a trajectory $X(t):=(x_1,\ldots,x_d)(t)\in U$ defined for all $t\ge 0$, and\\
($A_2$) $V:U\to\R$ is differentiable and
$\dt V(X) < 0$ for each $X\in U$.\\
Then\\
($B_1$) $X(t)$ has no limit points in $U$ as $t\to\infty$.\\
($B_2$)
 If furthermore $U=\cP := \{X:\text{for all }i, x_i>0\}$ and the trajectory $X(t)$ is bounded (\ie, there is a $\beta>0$ for which $x_i(t)\le\beta$ for all $i$ and $t\ge0$),
then 
\begin{align}\label{prop1 to 0}
    \displaystyle{\min_{1\le i\le d} x_i(t)}\to 0\text{ as }t\to\infty.
\end{align}
\end{prop}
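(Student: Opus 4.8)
The plan is to establish $(B_1)$ first and then obtain $(B_2)$ from it by a compactness argument. The starting observation is that hypothesis $(A_2)$ forces $t \mapsto V(X(t))$ to be strictly decreasing, since $\frac{d}{dt}V(X(t)) = \dt V(X(t)) < 0$ by \eqref{dotV}. A monotone decreasing function of $t$ has a limit $L \in [-\infty,\infty)$ as $t\to\infty$, and the whole argument hinges on contradicting the convergence of $V(X(t))$ in the vicinity of a putative limit point.

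For $(B_1)$, I would argue by contradiction: suppose $X^\ast \in U$ is a limit point, so there is a sequence $t_n \to \infty$ with $X(t_n) \to X^\ast$. Continuity of $V$ gives $V(X(t_n)) \to V(X^\ast)$, and since $V(X(t))$ is monotone this forces $L = V(X^\ast)$, a finite number. Next I would fix a reference solution through $X^\ast$: because $U$ is open and the flow is continuous, there is a time $T>0$ with $\phi_s(X^\ast)\in U$ for all $s\in[0,T]$, where $\phi_s$ denotes the flow of \eqref{DE}. Along this reference arc the map $s\mapsto V(\phi_s(X^\ast))$ is differentiable with derivative $\dt V(\phi_s(X^\ast))<0$, so it drops by a definite amount $\delta := V(X^\ast) - V(\phi_T(X^\ast)) > 0$.

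The contradiction then comes from continuous dependence on initial conditions, available because $F$ is $C^1$. For initial points $Y_0$ sufficiently close to $X^\ast$, the trajectory $\phi_s(Y_0)$ stays in $U$ and tracks $\phi_s(X^\ast)$ uniformly on $[0,T]$; combined with continuity of $V$, this yields $V(\phi_T(Y_0)) - V(Y_0) < -\delta/2$. Applying this to $Y_0 = X(t_n)$ for $n$ large gives $V(X(t_n+T)) - V(X(t_n)) < -\delta/2$. But $t_n\to\infty$ and $t_n+T\to\infty$, so both $V(X(t_n))$ and $V(X(t_n+T))$ converge to the same finite limit $L$, forcing their difference to $0$ --- a contradiction. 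This establishes $(B_1)$.

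Finally, for $(B_2)$ I would take $U=\cP$ and assume the trajectory is bounded by $\beta$. If $\min_i x_i(t)$ did not tend to $0$, there would be $\epsilon>0$ and a sequence $t_n\to\infty$ with $X(t_n)\in[\epsilon,\beta]^d$. This set is a compact subset of $\cP = U$, so $(X(t_n))$ has a subsequence converging to some $X^\ast\in[\epsilon,\beta]^d\subset U$, i.e.\ a limit point of the trajectory lying in $U$ --- contradicting $(B_1)$. Hence $\min_i x_i(t)\to 0$. I expect the only genuinely delicate step to be the continuous-dependence ``tracking'' estimate in $(B_1)$: one must ensure the comparison trajectories remain inside the open set $U$ over the whole interval $[0,T]$ (guaranteed by the positive distance from the compact reference arc to the complement of $U$) and that $V$, known only to be differentiable, is nonetheless uniformly continuous on the relevant compact neighborhood, so that the increments of $V$ converge as claimed.
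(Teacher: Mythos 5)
Your proof is correct and follows essentially the same route as the paper: the paper disposes of $(B_1)$ in one sentence (``$\dt V$ would be $0$ at any such limit point, contradicting $(A_2)$,'' citing the Barbashin--Krasovskii--LaSalle limit-point method), and your flow-tracking argument with the definite drop $\delta$ and continuous dependence is precisely the detailed justification of that sentence. Your compactness argument for $(B_2)$ is identical to the paper's.
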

When {\it($A_2$)} is satisfied,
we will call $V$ either a ``die-out'' Lyapunov function or, more simply, a Lyapunov function. 

We have made no assumptions about the sign of $V$ nor whether $V$ is bounded nor whether it is positive definite. In ($A_2$) we could alternatively assume $\dt V(X)>0$ on $U$ since we need only assume that $\dt V(X)$ is never zero. 
The result does not require $U$ to be invariant or positively invariant since it only makes statements about those trajectories that stay in $U$ for all future times, if there are any.
Conclusion ($B_1$) is true since $\dt V$ would be 0 at any such limit point, which contradicts ($A_2$); and ($B_2$) follows from ($B_1$)  since if \eqref{prop1 to 0} is false, there exists a sequence $t_n \to \infty$ and an $\varepsilon>0$ such that $\displaystyle{\min_{1 \le i \le d} x_i(t_n)} \ge \varepsilon$. {\black Since $X(t)$ is bounded, the sequence $\{X(t_n)\}$ has a limit point $X^*$ in $\cP$ as $n \to \infty$,} which contradicts ($B_1$). See other limit point methods in 
Barbashin-Krasovskii-LaSalle \cite[p.~309]{alligood1996chaos}.

See Prop.~\ref{prop V double dot} (Sec.~\ref{sec discussion}) for a significant generalization of Prop.~\ref{prop1}, though there, the domain $U$ is required to be simply connected.


\black
\section{{Die-out Theorem}}\label{sec:lyapunov}
The proof of Theorem~\ref{thm:DieOut1} is at the end of this section following our key Lemma~\ref{lemma:die-out}.

We will say a solution $X$ is {\bf doubly bounded} if for some $\beta >0$, 
$\frac{1}{\beta} < x_j(t)<\beta$ for all $t\ge0$ and all $1\le j\le d$.

\begin{cor}\label{cor1}
If for some $C$ there is a  doubly bounded solution $X$ of Eq. \eqref{eq:Z}, then  $\nu \bigcdot C = 0$ for every null vector $\nu$ {\black of $S^T$}.
\end{cor}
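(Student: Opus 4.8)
The plan is to exploit the fact, already recorded in Eq.~\eqref{nu dot C}, that $\dt\Lambda_\nu(X) = \nu\bigcdot C$ is \emph{constant} along any solution of Eq.~\eqref{eq:Z} whenever $\nu S = 0$. The whole corollary then reduces to observing that a function whose time-derivative is a nonzero constant grows without bound, which a doubly bounded solution cannot permit. I would argue by contradiction: suppose $\nu$ is a null vector of $S^T$ with $\nu\bigcdot C \ne 0$, and derive a contradiction with double boundedness.

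First I would integrate the relation $\frac{d}{dt}\Lambda_\nu(X(t)) = \dt\Lambda_\nu(X(t)) = \nu\bigcdot C$ in time to obtain
\begin{align*}
\Lambda_\nu(X(t)) = \Lambda_\nu(X(0)) + (\nu\bigcdot C)\,t \qquad\text{for all } t\ge 0.
\end{align*}
If $\nu\bigcdot C \ne 0$, the right-hand side tends to $+\infty$ or $-\infty$ as $t\to\infty$, so $\Lambda_\nu(X(t))$ is unbounded.

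Next I would show that double boundedness forbids this. Since $\Lambda_\nu(X(t)) = \sum_{i=1}^d \nu_i \ln x_i(t)$ and the hypothesis gives $\frac{1}{\beta} < x_i(t) < \beta$ for all $i$ and all $t\ge 0$, we have $|\ln x_i(t)| \le \ln\beta$ (taking $\beta>1$ without loss of generality), hence
\begin{align*}
|\Lambda_\nu(X(t))| \le \sum_{i=1}^d |\nu_i|\,|\ln x_i(t)| \le \Big(\sum_{i=1}^d |\nu_i|\Big)\ln\beta,
\end{align*}
which is a finite constant independent of $t$. This contradicts the unboundedness just established, so we must have $\nu\bigcdot C = 0$. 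Since $\nu$ was an arbitrary null vector of $S^T$, the conclusion holds for every such $\nu$.

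There is no real obstacle in this argument; it is a short consequence of the constancy of $\dt\Lambda_\nu$. The one point worth emphasizing is \emph{why} the hypothesis must be \emph{double} boundedness rather than the one-sided upper bound used elsewhere in the paper. Because the entries $\nu_i$ of a null vector may have either sign, controlling $\Lambda_\nu$ requires bounding each $\ln x_i(t)$ both from above and from below; a mere upper bound $x_i(t)\le\beta$ would control only the terms with $\nu_i>0$ and leave $\Lambda_\nu$ free to diverge to $-\infty$ through the coordinates that are dying out. The lower bound $x_i(t) > 1/\beta$ is exactly what rules out any species dying out, which is consistent with the interpretation that $\nu\bigcdot C = 0$ is the ``perfectly balanced'' coexistence condition flagged in the discussion preceding the corollary.
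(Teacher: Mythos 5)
Your proposal is correct and takes essentially the same approach as the paper: both argue by contradiction from the constancy $\dt{\Lambda}_\nu(X)=\nu\bigcdot C$ along solutions, so that $\Lambda_\nu(X(t))$ grows linearly in $t$ while boundedness of the solution keeps it finite. The only (cosmetic) difference is that the paper cites its die-out argument — the upper bound $\beta$ alone forces $\min_{i:\nu_i>0}x_i(t)\to 0$, violating the lower bound $1/\beta$ — whereas you inline the same mechanism by bounding $|\Lambda_\nu(X(t))|\le\bigl(\sum_i|\nu_i|\bigr)\ln\beta$ two-sidedly.
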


\begin{proof}
If $X$ is a bounded solution and $\nu$ is a null vector such that $\nu \bigcdot C$ is non-zero, then some species must die out so $X$ cannot be doubly bounded. But $X$ is doubly bounded, so $\nu \bigcdot C =0$ for every null vector $\nu$.
\end{proof}

We present several ``Facts'' and definitions  needed for the proof. These facts are mini propositions that present new core concepts and ideas. They represent original ideas that merit being highlighted as  central to the proofs, but their proofs are too simple for us to label them with the august title of Proposition.


Given a bounded solution $X(t)$, Theorem~\ref{thm:DieOut1} says that for each $t$ there are at least $k$ choices of $i$ for which Ineq.~\eqref{ineq ab thm} is satisfied. 
We can get constraints as to which $x_i$'s are small at some times $t$ from the study of the null vectors.

\black
To clarify ``almost every $C$'', consider the trivial case in which the matrix $S$ is identically zero. Then the differential equation is very simple:
$  x_i' = c_i x_i $ where $ i=1,\ldots,d.$
{\black Then there is a bounded solution only if every $c_i$ in $C$ satisfies $c_i \le 0$.
For almost every such $C$, $c_i<0$ for all $i$.}
For almost every {\black such} $C$, no coordinate of $C$ is zero. Hence for almost every $C$, either there is no bounded solution (when some $c_i$ is positive) or all solutions are bounded and all $x_i(t)$ go to zero as $t \to \infty$ (when $c_i<0$ for all $i$). Hence we may summarize the trivial system's behavior  by saying ``for almost every $C$, if there is a bounded solution, the solution goes to 0''.

{\bf Die-out Lyapunov functions.}
Consider the differential equations \eqref{eq:Z} on $\cP$.
The system of equations \eqref{eq:Z} can be rewritten on $\cP$ as
\begin{align}\label{equationX*-ndim}  
\left[
\begin{array}{c}
       \frac{x_1'}{x_1}\\
      \vdots \\
      \frac{x_d'}{x_d}
    \end{array}\right](t) &= C + S Z(t),
\end{align}
{\black where $Z(t)$ can have the form $Z(X(t),t)$.}

{\bf What is a ``Fact''?} The above theorem is a consequence of several key ideas that will guide the reader toward an overview of the proof.  The only originality with these facts is in their formulation, not in finding their concise proofs. We call these mini propositions ``Facts'' so as not to glorify them with full proposition status.  And we point out where they are needed.

\noindent
If $\eta$ is a null vector for which  $\eta\bigcdot C \ne 0$, then either $\nu:=\eta$ or $\nu:=-\eta$  satisfies 
$\nu\bigcdot C < 0$. 

This fact is used in the proof of Lemma \ref{lemma:die-out}. An equivalent statement is that there is some $j$ for which $\nu_j >0.$

\begin{Fact}
[$\dt\Lambda_\nu$ is constant]
\label{Lambda dot}
Assume some null vector $\nu$  (of $S^T$) satisfies $\nu\bigcdot C < 0$ for some $C$. 
Then for all $X\in\cP$,
\begin{align}\label{Lambda}
  \dt \Lambda_{\nu}(X) = \nu\bigcdot C<0,
\end{align}
\end{Fact}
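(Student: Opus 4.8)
The plan is to compute $\dt\Lambda_\nu(X)$ straight from its definition in Eq.~\eqref{dotV}, exploiting the fact that the logarithmic form of $\Lambda_\nu$ is precisely what collapses the multiplicative Lotka--Volterra structure of Eq.~\eqref{eq:Z} into the affine quantity $C+SZ$; the null-vector hypothesis then annihilates the $Z$-dependent part and leaves a constant.

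Concretely, I would first record $\nabla\Lambda_\nu(X) = (\nu_1/x_1,\ldots,\nu_d/x_d)$, which follows at once from $\Lambda_\nu(X)=\sum_i \nu_i\ln x_i$ on $\cP$ (Def.~\ref{def:nuS}). Feeding this into Eq.~\eqref{dotV} and then substituting $x_i' = x_i\big(c_i+\sum_j s_{ij}z_j\big)$ from Eq.~\eqref{eq:Z}, the factor $x_i$ cancels the $1/x_i$ and one obtains
\[
\dt\Lambda_\nu(X) \;=\; \sum_{i=1}^{d}\frac{\nu_i}{x_i}\,x_i' \;=\; \nu\bigcdot(C+SZ) \;=\; \nu\bigcdot C + \nu S Z.
\]
Since $\nu$ is a null vector of $S^T$, i.e.\ $\nu S=0$, the term $\nu S Z$ vanishes and $\dt\Lambda_\nu(X)=\nu\bigcdot C$. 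This is exactly the chain of equalities already displayed in Eq.~\eqref{nu dot C}, so in fact the Fact is immediate from that earlier computation; the hypothesis $\nu\bigcdot C<0$ then supplies the strict inequality.

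There is really no obstacle here --- the entire content is a one-line chain-rule calculation. The single point I would emphasize is \emph{robustness}: the cancellation of $\nu S Z$ uses nothing about $Z(t)$, which may depend on $X(t)$, on delayed states $X(t-\tau_j)$, or explicitly on $t$. Hence $\dt\Lambda_\nu$ is a genuine constant even in the nonautonomous and delayed settings, and when that constant is negative $\Lambda_\nu$ is a die-out Lyapunov function in the sense of Def.~\ref{def Die-out}. This constancy is what will later force $\Lambda_\nu(X(t))$ to decrease linearly in $t$, which is the engine driving the exponential die-out established in Lemma~\ref{lemma:die-out}.
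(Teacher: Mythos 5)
Your computation is exactly the paper's own argument: the paper proves this Fact simply by citing the chain of equalities in Eq.~\eqref{nu dot C}, which is the same gradient-plus-cancellation calculation you perform, with $\nu S=0$ killing the $Z$-dependent term. Your proposal is correct and takes essentially the same route, differing only in spelling out the intermediate steps and the (valid) remark about robustness to nonautonomous and delayed $Z$.
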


This result is derived in Eq.~\eqref{nu dot C} above. \qed

\bigskip

If $\nu\bigcdot C \neq 0$, then either $\Lambda_\nu$ or $\Lambda_{-\nu}$ is a die-out Lyapunov function. {\black This is a rigorous, stronger version of the competitive exclusion principle (CEP) which asserts} that two or more predators cannot coexist if they are only limited by one prey species. 
The mathematical version says they can not coexist unless they are perfectly balanced in some sense, a situation that would only occur with probability zero. 
In the theorem below, we assume ``$\nu\bigcdot C \neq 0$'', which is precisely the statement that they are not balanced.

Let $\nu$ be a null vector of $S^T$. For almost every $C$, $\nu \bigcdot C \neq 0$. Assume $\nu \bigcdot C \neq 0$.  We can assume $\nu \bigcdot C<0$ since if it was $>0$, we could replace $\nu$ with the null vector $-\nu$. 

\subsection{A key Lemma}
The following Lemma will be useful in determining which coordinates must die out, which is crucial for proving Theorem~\ref{thm:DieOut1}.

We need a preliminary fact to determine the exponential rate of decay for a specified $\nu$. Define
\begin{align}\label{nu plus}
\nu^+ :=  \sum_{j:\nu_j>0} \nu_j.
\end{align}
\begin{Fact}[\bf\BF $\nu^+$ Positivity]\label{fact positivity}
Assume Eq.~\eqref{eq:Z} has a bounded solution $X$, and some null vector $\nu$ satisfies $\nu\bigcdot C < 0$ for some $C$.  Then  $\nu^+\ne0$ (and it is positive).
\end{Fact}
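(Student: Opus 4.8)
The plan is to argue by contradiction, exploiting the tension between the forced divergence $\Lambda_\nu(X(t))\to-\infty$ and the fact that a null vector with no positive entries would keep $\Lambda_\nu$ bounded below along a bounded solution.

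First I would invoke Fact~\ref{Lambda dot}: because $\nu\bigcdot C<0$, the derivative along the trajectory $\dt{\Lambda_\nu}(X(t))=\nu\bigcdot C$ is a fixed negative constant, so integrating gives $\Lambda_\nu(X(t))=\Lambda_\nu(X(0))+(\nu\bigcdot C)\,t\to-\infty$ as $t\to\infty$. Next, suppose toward a contradiction that $\nu^+=0$, which by definition of $\nu^+$ means $\nu_j\le 0$ for every $j$. Writing $\Lambda_\nu(X(t))=\sum_{j}\nu_j\ln x_j(t)$, I would use only the upper bound $x_j(t)\le\beta$, hence $\ln x_j(t)\le\ln\beta$, to bound each term from below: for an index with $\nu_j<0$, multiplying $\ln x_j(t)\le\ln\beta$ by the negative number $\nu_j$ reverses the inequality to $\nu_j\ln x_j(t)\ge \nu_j\ln\beta$, while indices with $\nu_j=0$ contribute nothing. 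Summing over $j$ yields $\Lambda_\nu(X(t))\ge (\ln\beta)\sum_{j:\nu_j<0}\nu_j$, a finite constant independent of $t$. This contradicts $\Lambda_\nu(X(t))\to-\infty$.

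I would then conclude that at least one coordinate satisfies $\nu_j>0$, and since $\nu^+=\sum_{j:\nu_j>0}\nu_j$ is a sum of strictly positive terms (now nonempty), it is strictly positive, giving $\nu^+\neq 0$ as claimed.

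There is no genuine obstacle here; the result is essentially immediate once the linear divergence of $\Lambda_\nu$ is combined with the one-sided bound on the solution. The only subtlety worth flagging is that the argument needs only the upper bound $x_j(t)\le\beta$ and not any lower bound: the terms that could in principle drive $\Lambda_\nu$ downward are exactly those with $\nu_j<0$, and their lower bound is controlled precisely by the upper bound on $x_j$. (Conversely, terms with $\nu_j>0$ are the ones that can and must drive $\Lambda_\nu$ to $-\infty$, which is the intuition behind the conclusion and foreshadows that the die-out occurs among the coordinates with $\nu_j>0$.)
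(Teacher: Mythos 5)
Your proof is correct and follows essentially the same route as the paper's: both combine the linear divergence $\Lambda_\nu(X(t))=\Lambda_\nu(X(0))+(\nu\bigcdot C)t\to-\infty$ with the one-sided bound $\ln x_j(t)\le\ln\beta$ to force some $\nu_j>0$. The only difference is packaging — you argue by contradiction (if all $\nu_j\le 0$ then $\Lambda_\nu$ is bounded below), while the paper argues directly that some term $\nu_i\ln x_i(t_n)$ must tend to $-\infty$ and hence has $\nu_i>0$ — which is the same mechanism in contrapositive form.
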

This fact is needed to obtain the formula (Eq.~\eqref{def a b}) for $a_\nu$ and $b_{\nu}$ in the statement of Lemma \ref{lemma:die-out}.
\begin{proof}
This fact is equivalent to ``$\{j:\nu_j>0\}$ is non empty'' in Eq.~\eqref{nu plus}. We need to show 
$\nu_i$ is $ > 0$ for some $i$.
The hypotheses imply that $\Lambda(X)\to\infty$ as $t\to \infty$ so there is an $i$ and a sequence $t_n\to\infty$ such that $\nu_i \ln(x_i(t_n))\to -\infty$ as $n\to\infty$. Since $\ln(x_i)\le \ln(\beta)$, the only way for $\nu_i \ln(x_i(t_n))\to -\infty$ to occur is for 
$ \ln(x_i(t_n))\to -\infty$ which implies $\nu_i>0.$
\end{proof}

The null vector $\frac{\nu}{\nu^+}$ will play an important role. Fact~\ref{fact positivity} guarantees we are not dividing by 0 in Eqs.~\eqref{def a b} since $\nu^+>0$. Recall Eq.~\eqref{lyap} defines $\Lambda_{\nu} (X) := \nu\bigcdot \ln X$.

\begin{lemma}[\textbf{``Die-out'' Lyapunov Functions}]\label{lemma:die-out}
 Assume the hypotheses of Fact \ref{fact positivity}, \ie, Eq.~
\eqref{eq:Z} has a bounded solution $X$ and  some null vector $\nu$  satisfies $\nu\bigcdot C < 0$ for some $C$.\\
Then there exist $a_{\nu},b_{\nu}\in\R, b>0$, such that 
\begin{align}\label{ineq ab lem}
\min_{i: {\nu_i>0}} x_i(t)\le e^{a_{\nu}-b_{\nu}t}\text{ for all } t \ge 0.
\end{align}
where
\begin{align}\label{def a b}
    a_\nu := -\ln \beta \dis\sum_{i: \nu_i <0} \frac{\nu_i}{\displaystyle{\nu^+}} + {\displaystyle\sum_{i}} \frac{\nu_i}{\displaystyle{\nu^+} }\ln x_i(0), \text{ and }
    b_\nu := |\frac{\nu }{\displaystyle{\nu^+}}\bigcdot C|.
\end{align}
Furthermore $b_{\nu}$  and $a_{\nu}$
depend only on $\frac{\nu}{\nu^+}$ and on $C$ or $X(0)$ and $\beta$, respectively.
\end{lemma}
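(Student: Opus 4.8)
The plan is to exploit the single most important structural fact established just above, namely that $\dt\Lambda_\nu(X) = \nu\bigcdot C$ is a negative constant (Fact~\ref{Lambda dot}). Integrating this along the trajectory gives the exact linear formula $\Lambda_\nu(X(t)) = \Lambda_\nu(X(0)) + (\nu\bigcdot C)\,t$, so $\Lambda_\nu(X(t))$ decreases to $-\infty$ at a known rate. The entire lemma is then a matter of converting this statement about a weighted sum of logarithms into a statement about the minimum coordinate among those indices $i$ with $\nu_i>0$.

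First I would normalize. Since Fact~\ref{fact positivity} guarantees $\nu^+>0$, I can divide by $\nu^+$ and work with the rescaled vector $\nu/\nu^+$; this changes no signs and, crucially, makes the positive coefficients $\nu_i/\nu^+$ (over $i$ with $\nu_i>0$) sum to exactly $1$. Next I would split the sum $\Lambda_{\nu/\nu^+}(X(t)) = \sum_i (\nu_i/\nu^+)\ln x_i(t)$ into its positive-coefficient part and its negative-coefficient part (the $\nu_i=0$ terms drop out). For the negative-coefficient part, the boundedness hypothesis $x_i(t)\le\beta$ gives $\ln x_i(t)\le\ln\beta$, and since each coefficient there is negative this yields the lower bound $\sum_{i:\nu_i<0}(\nu_i/\nu^+)\ln x_i(t) \ge \ln\beta\sum_{i:\nu_i<0}(\nu_i/\nu^+)$. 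Rearranging isolates the positive-coefficient part as being at most $\Lambda_{\nu/\nu^+}(X(t)) - \ln\beta\sum_{i:\nu_i<0}(\nu_i/\nu^+)$.

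The final step is to replace the positive-coefficient weighted average of logs by the minimum. Because the weights $\nu_i/\nu^+$ over $\{i:\nu_i>0\}$ are nonnegative and sum to $1$, this weighted average is at least $\min_{i:\nu_i>0}\ln x_i(t)$. Chaining this with the previous bound and substituting the linear-in-$t$ expression for $\Lambda_{\nu/\nu^+}(X(t))$ --- whose slope is $(\nu/\nu^+)\bigcdot C = -b_\nu$ and whose intercept is $\sum_i (\nu_i/\nu^+)\ln x_i(0)$ --- produces exactly $\min_{i:\nu_i>0}\ln x_i(t)\le a_\nu - b_\nu t$ with $a_\nu,b_\nu$ as defined in \eqref{def a b}. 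Exponentiating (using $\ln\min = \min\ln$ for positive quantities) gives \eqref{ineq ab lem}.

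I expect no deep obstacle, since the content is already carried by Facts~\ref{Lambda dot} and \ref{fact positivity}. The only point requiring care is the bookkeeping of inequality directions relative to the sign of the coefficients: the negative-coefficient terms flip the bound coming from the boundedness hypothesis, while the normalized positive-coefficient terms must be recognized as a genuine convex combination so that the ``$\ge$ minimum'' step is valid. Getting both of these right, and matching the resulting intercept and slope to the stated $a_\nu$ and $b_\nu$, is the whole of the work.
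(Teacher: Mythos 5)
Your proposal is correct and follows essentially the same route as the paper's own proof: linearity of $\Lambda_\nu(X(t))$ in $t$ from Fact~\ref{Lambda dot}, the split into positive- and negative-coefficient terms with the $\ln\beta$ bound on the latter, and the bound of the positive part below by (a multiple of) $\min_{i:\nu_i>0}\ln x_i(t)$. The only cosmetic difference is that you normalize by $\nu^+$ at the outset (making the positive weights an explicit convex combination), whereas the paper works with $\nu$ throughout and divides by $\nu^+$ only at the end; the two computations are identical.
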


{\bf Which species must die out?}
    This Lemma concludes that some species must die out exponentially fast. In Ineq.~\eqref{ineq ab lem},
    the min is computed only over those $i$ for which $\nu_i>0$.
    If $\nu\cdot C <0$, there is some $i$ for which $\nu_i>0$ by Fact  \ref{fact positivity}. As mentioned in Eq. \eqref{must die}, species 1 or 3 must die out  in the sense that $\min \{x_1(t), x_3(t)\} \to 0$ exponentially fast.

\begin{proof}
From the existence of 
 some null vector $\nu$ that satisfies $\nu\bigcdot C < 0$
and Fact~\ref{Lambda dot} and Eq.~\eqref{Lambda},

 \begin{equation}\label{Eq:3-in-proof}
    \dt{\Lambda}_{\nu}=  \nu\bigcdot C <0.
 \end{equation}
Hence, by definition, $\Lambda_{\nu}$ is a die-out Lyapunov function. 

Let $X(t)$ be a solution that, as in the statement of the proposition, is bounded with bound $\beta$, \ie, there exists $\beta>0$ such that  $x_i(t) \le \beta$ for each $1\le i\le d$. 
From Eq.~\eqref{Eq:3-in-proof}, 
\begin{align}\label{Eq:4-in-proof}
    \Lambda_{\nu}(X(t)) - \Lambda_{\nu}(X(0)) = (\nu\bigcdot C)t 
\end{align}
For each $t\ge 0$, let
\begin{align*}
    m_\nu(t) := \dis{\min_{i: {\nu_i>0}} x_i(t)}.
\end{align*}
Then , $m_{\nu} \le x_i \le \beta$ implies $\ln m_{\nu} \le \ln x_i \le \ln \beta$
since $\ln$ is a monotonic increasing function.
Multiplying by $\nu^+$ and summing over $i$ for which $\nu_i>0$ and using \eqref{nu plus}, gives
\begin{align*}
    \nu^+ \ln m_{\nu} 
= \sum_{i: \nu_i > 0} \nu_i \ln m_{\nu}  
    \le \sum_{i: \nu_i > 0} \nu_i \ln x_i, 
\end{align*}    
For $\nu_i <0$, $\nu_i \ln x_i \ge \nu_i \ln \beta$, so summing over $i$ for which $\nu_i <0$ yields
\begin{align*}    
    \ln \beta \sum_{i: \nu_i <0} \nu_i &\le \sum_{i:\nu_i <0} \nu_i \ln x_i;
\end{align*}
\black
therefore, adding ``$\nu_i >0$'' and ``$\nu_i <0$'' terms, 
\begin{align*}
  \begin{split}
    \nu^+ \ln m_{\nu} + \ln \beta \sum_{i: \nu_i <0} \nu_i 
    &\le \sum_{i} \nu_i \ln x_i =\Lambda_\nu (X(t)) \\
    &= \Lambda_\nu (X(0)) - |\nu \bigcdot C| t.
    \end{split}
 \end{align*}
Hence solving for $\ln m_{\nu}$ gives
\begin{align*}
    \ln m_{\nu}(t) \le a_\nu - b_\nu t,
\end{align*}
where
\begin{align*}
    a_\nu := \frac{-\ln \beta {\displaystyle\sum_{i: \nu_i <0}} \nu_i + {\displaystyle\sum_{i}} \nu_i \ln x_i(0)}{\displaystyle{\nu^+}}, \text{ and }
    b_{\nu} := \frac{|\nu \bigcdot C|}{\displaystyle{\nu^+}}.
\end{align*}
which is equivalent to Eqs.~\eqref{def a b}.
Hence Eqs.~\eqref{ineq ab lem} is satisfied. 

Therefore we can write $a_{\nu}$ and $b_{\nu}$ as functions of a normalized $\nu$ such as $\frac{\nu}{\nu^+}$. Of course $a_{\nu}$ and $b_{\nu}$ are also depend upon $X(0), \beta$, and $C$.
\end{proof}
\black
\subsection{A team of Lyapunov functions}\label{team}

A ``team'' is a group working together toward a goal. 
We adopt the word {\black ``team''} for a group of Lyapunov functions working together.

{\bf\BF The team $\cT$ of null vectors for $S$.}
For simplicity, we will always write null vectors $\nu$  of $S^T$ as row vectors despite the fact that 
they are column vectors since 
we often think of $\nu$ as a left null vector of $S$, so 
$\nu S =0$. Then $\nu$ is a row vector.

\begin{defn}\label{support}
For a null vector $ \nu = (\nu_1, \ldots, \nu_d)$, define the {\bf\BF support of $\nu$}, 
 \begin{align*}
    \Supp(\nu) := \{i: \nu_i \neq 0\}.
\end{align*}
We say a null vector $\nu\ne0$ is a {\bf minimal-support} vector if there is no  null vector with strictly smaller support, \ie, there is no null vector $\eta$ where $\Supp(\eta)$ is a proper subset of 
$\Supp(\nu)$.

Let $\cT$ denote the set of minimal-support null vectors of $S^T$. We call $\cT$ the {\bf team} (of null vectors) of $S^T$. 
\end{defn}
\begin{Fact}[Almost every $C\in\R^d$]\label{fact T is lines}
The set $\cT\cup\{0\}$
consists of a finite number of lines that pass through the origin, one line for each distinct support set $\Supp(\nu)$ for $\nu\in\cT$.\\
Hence, for almost every $C\in\R^d$, $\nu\bigcdot C\ne0$ for all $\nu\in\cT$.
\end{Fact}
This Fact is used at the beginning of the Proof of Theorem~\ref{thm:DieOut1}.

The simplest matrix is where $S$ is $0$, so every vector is a null vector. {\black We will discuss this case in Example~\ref{ex specific}.}
For every vector $C$ there is a null vector $\nu$ for which 
$\nu\bigcdot C=0$. However, $\cT$ is much smaller than the set of all null vectors {\black when the null space has dimension greater than $1$.}
For the matrix $S=0$, the minimal support null vectors have one coordinate that is non 0. Hence $\cT\cup\{0\}$ consists of $d$ coordinate axes, and 
$\{C\in\R^d: \nu\bigcdot C\ne0 \text{ for all }\nu\in\cT\}$ is the set of vectors $C$ for which no coordinate is 0. Fact~\ref{fact T is lines} is a generalization of this case.

\begin{proof}
First we show if two minimal-support vectors $\nu$ and $\eta$ are linearly independent, then
\begin{align*}
    \Supp(\nu) \ne \Supp(\eta);
    \end{align*}
If $\nu$ is a  minimal-support vector, so is $\alpha\nu$ for each non-zero number $\alpha$. Hence $\cT \cup \{0\}$ is a finite set of lines through the origin.

If two linearly independent null vectors have the same support $\sigma$ {\black (see Def.~\ref{support})}, then there are null vectors with strictly smaller supports.
Taking linear combinations of the linearly independent null vectors,  for each coordinate $j\in\sigma$ we can find one or more null vectors with strictly smaller support, one whose support does not include $j$.

If $\nu\bigcdot C=0$, then $C$ is in the subspace that is perpendicular to one of the lines in $\cT$. Since there are finitely many lines, the set of such $C$ has measure 0.
\end{proof}

Of course, 0 is a null vector but its support is $\{1,2,\ldots,d\}$.
For each non-zero null vector $\nu$, the Lyapunov functions $\Lambda_\nu$ tells us some $x_i$ must die out where $i\in \Supp(\nu)$, (assuming $\nu\bigcdot C \ne 0$). Therefore the set or ``team'' of null vectors having the smallest or ``minimal'' support is most useful, and sometimes the team has many members.

It follows that for each non-zero null vector $\nu$, there are minimal-support null vectors, the union of whose supports is 
$\Supp(\nu)$. Furthermore, the original null vector $\nu$ is a linear combination of the minimal-support null vectors.

\begin{defn}
Let $k>0$ be the dimension of the kernel of $S^T$. 
We say a coordinate $j$ is a  {\bf ``kernel coordinate''} if there is some null vector $\nu$
whose $j^{th}$ coordinate value $\nu_j$ is non-zero.  This concept first appeared in 
\cite{jahedi2022global} and \cite{jahedi2022robustness}.

Let $\cK$ be the set of {\bf ``kernel coordinates''} $i$, coordinates for which $\nu_i>0$ for some non-zero null vector $\nu$ of $S^T$.
\end{defn}

Lemma~\ref{lemma:die-out} says there are $a_\nu$ and $b_\nu$ that tell how fast $\min_{i: {\nu_i>0}} x_i(t)$
dies out. Now we show there are alternative $a$ and $b$ that work for all $\nu\in\cT$, provided $\nu\bigcdot{C}<0$.
Because $a_\nu$ and $b_\nu$ are only defined when $\nu\bigcdot{C}<0$, define
\begin{align}
 \cT_C :=&\{\nu\in\cT:\nu\bigcdot{C}<0\} ,\\
 a(\cT):=& \sup\{a_\nu: \nu\in\cT_C\}, \text{ and }\label{a cT}\\
 b(\cT):=& \inf\{b_\nu: \nu\in\cT_C\};\label{b cT}
 \end{align}
see Ineq.~\eqref{ineq ab lem} in Lemma~\ref{lemma:die-out} for $a_\nu$ and $b_\nu$.
\begin{Fact}[There exist $a$,$b$ that provide a lower bound die out rate for all  $\nu\in\cT$]\label{fact max a min b}
Assume the dimension of the kernel of $S^T$ is $k>0$ and \ Eq.~\eqref{eq:Z} has a bounded solution $X$.
Assume $\nu\bigcdot{C}\ne 0$ for all $\nu\in\cT$.
Then $a(\cT)<\infty$ and $b(\cT)>0$, and for $\nu\in\cT_C$,
\begin{align}\label{ineq ab lem2}
\min_{i: {\nu_i>0}} x_i(t)\le e^{a(\cT)-b(\cT)t}\text{ for all } t \ge 0.
\end{align}
Notice that in addition to $\cT$,  $a(\cT)$ also depends on $\beta$ and  $X(0)$, and $b(\cT)$ depends upon $C$.
\end{Fact}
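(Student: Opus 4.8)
The plan is to reduce the supremum and infimum that define $a(\cT)$ and $b(\cT)$ from the a priori infinite index set $\cT_C$ to a maximum and minimum over a \emph{finite} set of values, and then to combine the per-vector bound of Lemma~\ref{lemma:die-out} with an elementary monotonicity comparison in $t$. The whole argument rests on the structural description of $\cT$ in Fact~\ref{fact T is lines} together with the scale invariance built into the formulas \eqref{def a b}.

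First I would record the relevant finiteness. By Fact~\ref{fact T is lines}, $\cT\cup\{0\}$ is a finite union of lines through the origin, one for each distinct support set. Since $k>0$ there is a nonzero null vector, and among all nonzero null vectors (whose supports lie in the finite family of subsets of $\{1,\dots,d\}$) minimal-support vectors exist, so $\cT\ne\emptyset$. On each such line the hypothesis $\nu\bigcdot C\ne0$ forces $\alpha\nu\bigcdot C=\alpha(\nu\bigcdot C)$ to change sign with $\alpha$, so exactly one of the two rays satisfies $\nu\bigcdot C<0$; hence $\cT_C$ meets each line in a single ray and in particular $\cT_C\ne\emptyset$.

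The key observation is scale invariance. For $\alpha>0$ one has $(\alpha\nu)^+=\alpha\,\nu^+$, so $\frac{\alpha\nu}{(\alpha\nu)^+}=\frac{\nu}{\nu^+}$, and by the last sentence of Lemma~\ref{lemma:die-out} the quantities $a_\nu$ and $b_\nu$ depend on $\nu$ only through $\frac{\nu}{\nu^+}$ (together with the fixed data $C$, $X(0)$, $\beta$). Therefore $a_\nu$ and $b_\nu$ are constant along each ray of $\cT_C$, and since there are only finitely many rays, the value sets $\{a_\nu:\nu\in\cT_C\}$ and $\{b_\nu:\nu\in\cT_C\}$ are finite and nonempty. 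A finite nonempty set of reals attains its supremum and infimum, so $a(\cT)<\infty$ and $b(\cT)$ is the minimum of finitely many numbers. Moreover each $b_\nu=|\frac{\nu}{\nu^+}\bigcdot C|$ is strictly positive, since $\nu\bigcdot C\ne0$ by hypothesis and $\nu^+>0$ by Fact~\ref{fact positivity}; hence $b(\cT)>0$ as a minimum of finitely many positive numbers.

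Finally I would derive the uniform bound. Fix any $\nu\in\cT_C$. Lemma~\ref{lemma:die-out} gives $\dis\min_{i:\nu_i>0}x_i(t)\le e^{a_\nu-b_\nu t}$. Because $a_\nu\le a(\cT)$ and $b_\nu\ge b(\cT)$, for $t\ge0$ we have $a_\nu-b_\nu t\le a(\cT)-b(\cT)t$, and monotonicity of the exponential yields Ineq.~\eqref{ineq ab lem2}. The only genuinely substantive step is the reduction to a finite index set: a priori the sup and inf run over a whole ray's worth of vectors on each of finitely many lines, and the resolution is precisely that $a_\nu,b_\nu$ are invariant under positive rescaling while $\cT$ consists of finitely many rays, so the extrema are attained over finitely many values. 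Everything after that is routine.
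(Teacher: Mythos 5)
Your proposal is correct and follows essentially the same route as the paper: both reduce the supremum and infimum to finitely many values by observing that $a_\nu,b_\nu$ depend on $\nu$ only through $\frac{\nu}{\nu^+}$ and that $\cT$ contains only finitely many directions (one per support set), and both finish with the comparison $e^{a_\nu-b_\nu t}\le e^{a(\cT)-b(\cT)t}$ for $t\ge0$. The only cosmetic difference is that you invoke Fact~\ref{fact T is lines} for the finitely-many-lines structure, whereas the paper's proof re-derives that finiteness inline via the linear-independence/distinct-support argument for minimal-support vectors.
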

Fact \ref{fact max a min b} is used in the Proof of Theorem~\ref{thm:DieOut1}
to provide an exponential decay rate for $\nu\in\cT$. 
The proof of Fact~\ref{fact max a min b} shows that while there are infinitely many $\nu\in\cT$, there are only finitely many values of $a_\nu$ and $b_\nu$ for $\nu\in\cT$
(see Eqs.~\eqref{a cT} and~\eqref{b cT}).

\begin{proof}
Assume $\nu$ and $\eta$ are minimal-support vectors in $\cT_C$. Assume either $a_\nu\ne a_\eta$ or $b_\nu\ne b_\eta$. Then  
$\frac{\nu}{\nu^+}\ne\frac{\eta}{\eta^+}$ from Eqs.~\eqref{def a b}. Hence 
$\nu$ and $\eta$ are linearly independent.
Then $\Supp(\nu)\ne \Supp(\eta)$, since
if they had the same support $J$, there would be a non-zero linear combination $\psi$ of $\nu$ and $\eta$ for which $\psi_i=0$ for some $i\in J$.
Hence neither $\nu$ and $\eta$ would be minimal-support vectors.
Since there are only finitely many subsets of $1,\ldots,d$, there are only finitely many distinct values of $a_\nu$ and $b_\nu$ in Eqs.~\eqref{a cT} and \eqref{b cT}. Hence $a(\cT)<\infty$ and $b(\cT)>0$ since each $b_\nu$ is $>0$ for $\nu\in\cT_C$. 

Therefore for each $\nu\in\cT_C,$
\begin{align*}
    e^{a_{\nu}-b_{\nu}t}\le e^{a(\cT)-b(\cT)t} \text{ for all } t \ge 0.
\end{align*}
 Therefore, Ineq.~\eqref{ineq ab lem2} follows from Ineq.~\eqref{ineq ab lem}.
\end{proof}

We say species $i$ is {\bf \BF dying out at time $t$} if $x_i(t) \le e^{a(\cT) - b(\cT)t}$. Let $J$ be a set of $j$ coordinate numbers such as $\{1,3,5\}$ when $j=3$. 
Suppose we know that the $j$ species (or coordinates) in $J$ are ``dying out at time $t$'', and that some species in $\Supp (\nu)$ is ``dying out at time $t$''. 
When $J$ and $\Supp(\nu)$ are assumed to be disjoint, we know there are at least $j+1$ species that are dying out at time $t$. This knowledge is what Fact \ref{fact T} gives us.

\begin{Fact}\label{fact T}
Assume the dimension of the kernel of $S^T$ is $k>0$.
Let $0\le j< k$.
For each set $J\subset\{1,\dots,d\}$ of $j$ coordinates, there is a non-zero null vector $\nu\in \cT$ such that $J\cap\Supp(\nu)$ is empty.
\end{Fact}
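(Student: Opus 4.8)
The plan is to split the statement into an existence step and a minimality step. First I would produce \emph{some} nonzero null vector whose support avoids $J$ by a dimension count, and then I would shrink it to a genuine minimal-support vector of $\cT$ without ever enlarging its support, so that disjointness from $J$ is preserved.

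For the existence step, work inside the kernel $K := \ker S^T \subseteq \R^d$, which has dimension $k$ by hypothesis. Imposing $\nu_i = 0$ for every $i \in J$ is exactly $j = |J|$ linear conditions on $K$, so the subspace $K_J := \{\nu \in K : \nu_i = 0 \text{ for all } i \in J\}$ has $\dim K_J \ge k - j$. Since $j < k$, this forces $\dim K_J \ge 1$, hence there is a nonzero $\nu_0 \in K_J$, and by construction $\Supp(\nu_0) \cap J = \emptyset$. This $\nu_0$ need not be minimal-support, so it is not yet known to lie in $\cT$.

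The second and more substantive step replaces $\nu_0$ by a $\cT$-member with no larger support. Here I would invoke the subordination property recorded just before the definition of kernel coordinates: every nonzero null vector admits minimal-support null vectors whose supports are contained in its own, their supports in fact unioning to $\Supp(\nu_0)$. Choosing any such $\eta \in \cT$ gives $\Supp(\eta) \subseteq \Supp(\nu_0)$, whence $\Supp(\eta) \cap J \subseteq \Supp(\nu_0) \cap J = \emptyset$, which is exactly the desired conclusion. If one prefers a self-contained argument for this step, select $\eta$ to have inclusion-minimal support among all nonzero null vectors $\zeta$ with $\Supp(\zeta) \subseteq \Supp(\nu_0)$; such a minimum exists because supports are subsets of the finite index set $\{1,\dots,d\}$. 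The one point requiring care is that this $\eta$ is \emph{globally} minimal-support, i.e.\ truly a member of $\cT$: if some null vector $\zeta$ had $\Supp(\zeta) \subsetneq \Supp(\eta)$, then also $\Supp(\zeta) \subsetneq \Supp(\nu_0)$, contradicting how $\eta$ was selected.

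The main obstacle is conceptual rather than computational. The dimension count is routine; the genuine content is confirming that the support can be reduced all the way down to a member of $\cT$ while remaining disjoint from $J$, which rests on the elementary but crucial observation that ``minimal among null vectors supported inside $\Supp(\nu_0)$'' coincides with ``globally minimal-support.'' Once that equivalence is in hand, the earlier subordination fact applies verbatim and the proof closes.
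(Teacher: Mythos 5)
Your proposal is correct, and its first step---the dimension count producing a nonzero $\nu_0\in\ker S^T$ vanishing on $J$---is precisely the paper's \emph{entire} written proof: the paper observes that a nonzero linear combination of $k$ linearly independent null vectors can be made to vanish on the $j<k$ coordinates of $J$, and stops there. What you add, and what the paper's one-line proof leaves implicit, is the verification that this vector can be replaced by a genuine member of $\cT$ without enlarging its support. The paper is tacitly relying on its earlier remark (stated just after the proof of Fact~\ref{fact T is lines}) that for every nonzero null vector there are minimal-support null vectors whose supports lie inside (indeed, union to) its support; your self-contained argument---take $\eta$ inclusion-minimal among nonzero null vectors supported inside $\Supp(\nu_0)$, and note that any nonzero null vector with support strictly inside $\Supp(\eta)$ would also be supported inside $\Supp(\nu_0)$, contradicting how $\eta$ was chosen---supplies exactly the justification that remark needs. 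This extra care is not pedantic: membership in $\cT$ is what the proof of Theorem~\ref{thm:DieOut1} actually uses, since the almost-every-$C$ guarantee that $\nu\bigcdot C\ne 0$ comes from Fact~\ref{fact T is lines} and holds only over the finitely many lines constituting $\cT$, not over arbitrary null vectors (for $k\ge 2$, every $C$ is orthogonal to \emph{some} null vector). So your route coincides with the paper's where the paper says anything, and is more rigorous where it does not.
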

This Fact is used in the Proof of Theorem~\ref{thm:DieOut1} to show $(P_j)$ implies $(P_{j+1})$.
\begin{proof}
Given $k$ linearly independent null vectors and any set $J$ of $j<k$ coordinates, there is a non-zero linear combination $\nu$ of those null vectors for which $\nu_i =0$ for each $i\in J$.
\end{proof}

The proof of Theorem~\ref{thm:DieOut1} uses Lemma~\ref{lemma:die-out},
Facts~\ref{fact T is lines},~\ref{fact T}, and~\ref{fact max a min b}.

\begin{proof}[Proof of Theorem~\ref{thm:DieOut1}]
Assume $k>0$. 

By Fact~\ref{fact T is lines}, almost every $C$ satisfies $\nu \bigcdot C \ne0$ for all (non-zero) $\nu\in \cT$. Choose such a $C$.

By Fact~\ref{fact max a min b}, there exists $a^* := a(\cT)$ and $b^* := b(\cT)$ (defined in Eqs.~\eqref{a cT} and ~\eqref{b cT}) such that from Ineq.~\eqref{ineq ab lem2},
for each $\nu\in\cT$,
\begin{align*}\label{}
\min_{i: {\nu_i>0}} x_i(t)\le  e^{a_\nu - b_\nu t}\le  e^{a^* - b^*t}\text{ for all } t \ge 0.
\end{align*}

For $j\ge 1$ we define the following. 

\bigskip

\noindent
{\bf\BF Property $P_j$:} For each given $t$, there are $j$ distinct coordinate numbers $S_j :=\{i_1,\ldots,i_j\}$ such that for each $i\in S_j$, $x_i(t)\le e^{a^*-b^*t}$. 
 
We will prove $P_k$ holds by induction.
Notice that $P_1$ is satisfied by By Lemma~\ref{lemma:die-out}.
We claim that if  $1\le j<k$, then $P_j$ implies $P_{j+1}$. To see this, notice that by Fact~\ref{fact T},
there is a null vector $\nu^{(j+1)}\in \cT$ such that $\Supp(\nu^{(j+1)})$ contains none of the $j$ coordinates in $S_j$. Writing $\nu = \nu^{(j+1)}$ and applying Lemma~\ref{lemma:die-out} to $\nu$,  choose ${i_{j+1}}\in\Supp(\nu)$ such that $\nu_{i_{j+1}}>0$ and
from Ineq.~\eqref{ineq ab lem},
\begin{align*}
x_{i_{j+1}} = \min_{i: {\nu_i>0}} x_i(t)\le e^{a^*-b^*t}.
\end{align*}
Let $S_{j+1} :=\{i_1,\ldots,i_{j+1}\}$. Hence $P_{j+1}$ is satisfied, proving the claim.

By an induction that stops at $P_k$, the result is proved.
 \black
\end{proof}
\section{Examples}\label{sec examples}
We use a family of (die-out) Lyapunov functions of the form $\Lambda_\nu$ in Eq.~\eqref{lyap} to establish that $k$ species die out exponentially fast (Def.~\ref{def dieout}). 

\begin{example}[\textbf{A three-predator one-prey model where two species must die out}]\label{ex:4D}
\normalfont
Fig.~\ref{fg:4-troph c} shows a graph of an ecosystem that motivated our investigations of Lyapunov functions.
Fig.~\ref{fig:3preds1prey-dynamics} {\black(see Introduction)}  displays the behavior of the Lotka-Volterra model. 
Fig.~\ref{fig:3preds1prey-dynamics} (A) illustrates that in unequal resource distribution, the less efficient predators can die out.
Fig.~\ref{fig:3preds1prey-dynamics} (B) shows the oscillation of the population. 
 
 We can model this system in two ways, either as a 4-dimensional Lotka-Volterra system
Eq.~\eqref{eq:LV} 
 or as the three-dimensional non-autonomous system 
~\eqref {eq:Z}
where it is not specified how the $Z=(z_1)$ is determined. In the Lotka-Volterra system, we choose $x_1$ to be the prey density and $x_2,x_3,x_4$ the three predators.
Die out occurs however $Z$ is defined provided there is a bounded solution.
In order to make the coordinates in the two approaches compatible, we write the ${Z}$ system as follows so that the subscripts of the two approaches are the same, so that the matrix below $\SZ$ is a $3 \times 1$ submatrix of the  $4\times4$ Lotka-Volterra matrix $\SLV$.

\begin{align}\label{eq S}  
\left(\frac{\dot{x_2}}{x_2},\frac{\dot{x_3}}{x_3},\frac{\dot{x_4}}{x_4}\right)(t) &= C + S Z(t);\quad\text{where }
  { \SZ =\left[
\begin{array}{c}
      s_{21}  \\
      s_{31}  \\
      s_{41}
    \end{array}\right]}.
\end{align}
The number of predators that can survive depends on the nonzero rows of null vectors of the  matrix and on $C$.
The kernel coordinates are $2,3$, and $4$.
{\black There could be additional populations, $x_1, x_5, \ldots$, but as long as there is a bounded solution $X(t)$, the die-out properties of $x_2,x_3,x_4$ depend only on Eq.~\eqref{eq S}, and are independent of the form of the equations for the additional variables.}

Let $k$ be the dimension of the kernel of $S$.
For typical coefficients, the matrix $\SZ$ has a kernel of dimension $k=2$ with the following null vectors, any two of which are a basis for the kernel.
\begin{align*}
 \nu^{(34)} &= \pm\begin{bmatrix}
 0,s_{41},-s_{31}
 \end{bmatrix}, \\
 \nu^{(24)} &= \pm\begin{bmatrix}
       -s_{41}, 0, s_{21} 
     \end{bmatrix},\\
     \nu^{(23)} &= \pm\begin{bmatrix}
       -s_{31}, s_{21}, 0  
     \end{bmatrix}. 
\end{align*}

In our notation, {\black when a null vector has non-zero coordinates such as $j_1,j_2$, and $j_3$, we write it as $\nu^{(j_1j_2j_3)}$.} 
We use such notation only for null vectors with minimal support.

For almost every $C$, choose the sign of each $\nu$ above so that
$\nu\bigcdot C<0$ as required by Theorem~\ref{thm:DieOut1}.

\black
Depending on $C$, the team of die-out Lyapunov functions is:

\begin{align}
     \Lambda_{\nu^{(34)}}(X(t)) 
    &= \pm(s_{41} \ln x_3(t) -s_{31} \ln x_4(t)),\label{eq:34}\\
    \Lambda_{\nu^{(24)}}(X(t)) &= 
   \pm( -s_{41} \ln x_2(t) + s_{21} \ln x_4(t)), \\ \label{eq:32}
   \Lambda_{\nu^{(23)}}(X(t)) &= 
    \pm(-s_{31} \ln x_2(t) + s_{21} \ln x_3(t)). 
\end{align}

{\black Let $X(t)$ be a bounded trajectory.

Let $\nu^{(34)} = \begin{bmatrix}
 0,s_{41},-s_{31} 
 \end{bmatrix}$;  (we have chosen a plus sign here), and assume $s_{41}, s_{31} >0$ to simplify calculation. 
Then $\Lambda_{\nu^{(34)}}=s_{41} \ln x_3(t) -s_{31} \ln x_4(t)$; and $\Lambda_{\nu^{(34)}}(X)=s_{41} \ln x_3 -s_{31} \ln x_4$. Suppose $C \bigcdot\nu^{(34)} \neq 0$.

If $C \bigcdot\nu^{(34)} > 0$, then $\Lambda_{\nu^{(34)}}(X) \to +\infty$. Since the coordinates of $X$ are bounded, $s_{41} \ln x_3(t)$ is bounded. Hence, $-s_{31} \ln x_4(t) \to \infty$, which means $\ln x_4(t) \to -\infty$, which means $x_4(t) \to 0$, as $t \to \infty$. 

If however $C \bigcdot\nu^{(34)} < 0$, then by similar reasoning, $x_3(t) \to 0$ as $t \to \infty$. 

More generally, depending on the signs of $C \bigcdot\nu$ and $s_{ij}$, 
\begin{itemize}
    \item $\Lambda_{\nu^{(23)}}$  tells us $x_2$ or $x_3$ must die out, and 
    \item $\Lambda_{\nu^{(24)}}$  tells us $x_2$ or $x_4$ must die out, and 
    \item $\Lambda_{\nu^{(34)}}$  tells us that for any bounded solution, $x_3$ or $x_4$ must die out.
\end{itemize}
Together the three $\Lambda$'s tell us that at least two of the three populations must die out. 
}
\end{example}
{\black In this example knowing the signs of the coefficients $s_{ij}$ and the signs of the $C \bigcdot\nu$'s, then we can determine which two populations must die out.  }

\subsection{Who must die?} Sometimes a team of Lyapunov functions determines who must die {\black (as in the above example)} and sometimes it does not. We show a case below where whether it does or does not depend on the constant $C$. 
The simplest case of Eq.~\eqref {eq:Z} is where (i) the null space is one dimensional, so there is essentially one Lyapunov function, and (ii) where $Z(t)$ is constant. In the next example, the Lyapunov function is Eq.~\eqref{intro null vec}. 

\black
\begin{example}[\bf{The kernel is one dimensional and $Z$ is constant}]\label{ex specific}
\normalfont
Here we illustrate how the choice of $C$ affects which species dies out in a simple case where $Z(t)$ is constant. Consider the following three-dimensional model. 
\begin{align*}
\begin{split}
\left[
\begin{array}{c}
      \frac{\dot{x}_1}{x_1} \\
      \frac{\dot{x}_2}{x_2}  \\
      \frac{\dot{x}_3}{x_3}
    \end{array}\right] = \left[
\begin{array}{c}
      -1+z_1+2z_2 \\
      c_2+z_1+z_2 \\
      -1+3z_1+z_2
    \end{array}\right] = \left[
\begin{array}{c}
      -1 \\
      c_2  \\
      -1
    \end{array}\right] + \left[
\begin{array}{cc}
      1 & 2 \\
      1 & 1  \\
      3 & 1
    \end{array}\right] \left[
\begin{array}{c}
      z_1 \\
      z_2 
    \end{array}\right]
\end{split}
\end{align*}
in which $Z=(z_1,z_2)$ is constant and $C=(-1,c_2,-1)$ has one free parameter, $c_2$, which can be thought of as the per capita death rate of $x_2$.

For an ecological interpretation, we might call $x_1, x_2,$ and $x_3$ predators and $z_1$ and $z_2$ are prey or resources. 

We explore how the behavior depends on $c_2$. Assume there is a bounded solution $X(t)$ in $\cP$ (with all coordinates strictly positive).
Define three half-planes,
\begin{align} \label{ineq:3d}
\begin{split}
\cH_1 &:= \{(z_1,z_2):-1+z_1+2z_2\le 0\} \text{ where }  \dt x_1 \le 0, \\
\cH_2 &:= \{(z_1,z_2):c_2+z_1+z_2\le 0\} \text{ where }  \dt x_2 \le 0,\\
\cH_3 &:= \{(z_1,z_2):-1+3z_1+z_2\le 0\} \text{ where }  \dt x_3 \le 0.
\end{split}
\end{align}
If $Z$ is chosen in the interior of {\black half-plane} $\cH_j$, then $\frac{\dt x_j}{x_j}$ is negative and is constant since $Z$ is constant, so $x_j(t) \to 0$ exponentially fast.

{\black For simplicity we assume $z_1,z_2 \in \cP$, and we plot $G \cap \cP$ }be the shaded set in Fig.~\ref{fig:3 half planes} where the three planes and $\cP$ intersect. 
For there to be a bounded trajectory, $Z$ must lie in $G$ and quite possibly on the boundary of $G$.
The boundary line of each of the above 3 half planes is shown, and the shaded region shows the closed set $G$ in $\cP$ where all three inequalities are satisfied. The dashed line is the boundary of half-plane $(\cH_2)$.

Since in this example we assume $Z$ is constant, if $Z=(z_1,z_2)$ is outside $G$, all species must die out. When $Z\in\cP$ is a vertex of $G$, two species can persist. 
Write 
$$S =\left[
\begin{array}{cc}
      1 & 2  \\
      1 & 1  \\
      3 &1
    \end{array}\right].$$
The kernel of $S^T$ is one dimensional, and it contains the null vector $(2,-5,1)^T$. Its coordinates are the coefficients of Eq.~\eqref{intro null vec}, and $C = (-1,c_2,-1)$.

 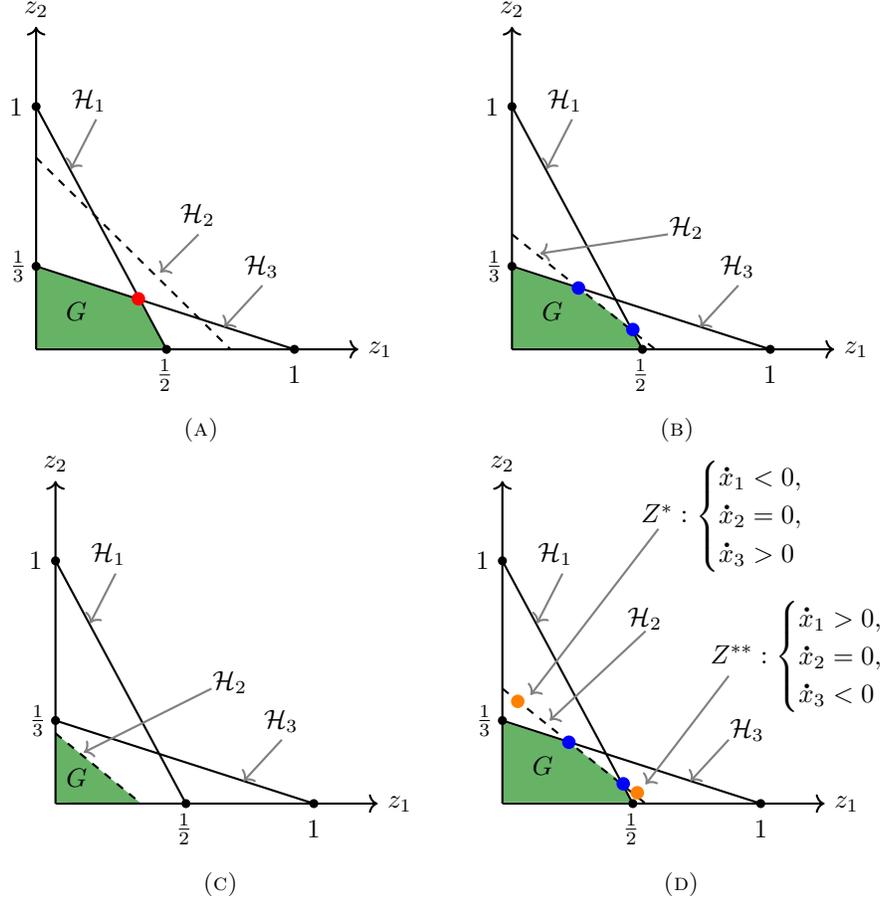
\begin{figure}[H]
\begin{minipage}[b]{0.5\linewidth}  
\centering
\begin{tikzpicture}[scale=.85,sibling distance=0pt]
     \pgfmathsetmacro{\e}{1.4}   
    \pgfmathsetmacro{\a}{1}
     \fill[dgn!60] (-3.54,-3.78) -- (-3.54,-2.52) -- (-1.95,-3.009) -- (-1.5,-3.78) -- cycle; 
    \draw[thick, ->] (-3.54,-3.8) -- (1.5,-3.8) node[right]{$z_1$};    
    \draw[thick, ->] (-3.54,-3.8) -- (-3.54,1.25) node[above]{$z_2$};   
    \draw[thick, dashed] (-3.54,-0.8) -- (-.5,-3.8); 
     
     \draw (-3.54,-2.5) node[left]{$\frac{1}{3}$};
     \fill (-3.54,-2.5)  circle[radius=2pt];
     \draw[thick] (-3.54,-2.5) -- (0.5,-3.8);
     \draw (0.75,-4.2) node[left]{$1$};
     \fill (0.5,-3.8)  circle[radius=2pt];
     
     \draw (-1.25,-4.2) node[left]{$\frac{1}{2}$};
     \fill (-1.5,-3.8)  circle[radius=2pt];
     
     \draw (-3.6,.0) node[left]{$1$};
     \fill (-3.54,.0)  circle[radius=2pt];
     \draw[thick] (-3.54,.0) -- (-1.5,-3.8);
     
     \fill [red] (-1.94,-3.009)  circle[radius=3pt];

     \draw (-2.6,-3.2) node[left]{$G$};
     \draw (-2.3,0.1) node[left]{$\cH_1$};
     \path [->,draw,thick,gray] (-2.6, -.2) -- (-3.,-0.98);
     \draw (0.4,-2.5) node[left]{$\cH_3$};
     \path [->,draw,thick,gray] (0, -2.8) -- (-0.6,-3.45);
     \draw (-.6,-1.7) node[left]{$\cH_2$};
     \path [->,draw,thick,gray] (-1, -2.) -- (-1.6,-2.6);
\end{tikzpicture}
 \subcaption{}
\end{minipage}
\begin{minipage}[b]{0.5\linewidth}  
\centering
\begin{tikzpicture}[scale=.85,sibling distance=0pt]
     \pgfmathsetmacro{\e}{1.4}   
    \pgfmathsetmacro{\a}{1}
     \fill[dgn!60] (-3.54,-3.78) --(-3.54,-2.52) -- (-2.5,-2.85) -- (-1.75,-3.41) -- (-1.5,-3.78) -- cycle; 
    \draw[thick, ->] (-3.54,-3.8) -- (1.5,-3.8) node[right]{$z_1$};    
    \draw[thick, ->] (-3.54,-3.8) -- (-3.54,1.25) node[above]{$z_2$};   
    \draw[thick, dashed] (-3.54,-2) -- (-1.3,-3.8); 
 
     \draw (-3.54,-2.5) node[left]{$\frac{1}{3}$};
     \fill (-3.54,-2.5)  circle[radius=2pt];
     \draw[thick] (-3.54,-2.5) -- (0.5,-3.8);
     \draw (0.75,-4.2) node[left]{$1$};
     \fill (0.5,-3.8)  circle[radius=2pt];
     
     \draw (-1.25,-4.2) node[left]{$\frac{1}{2}$};
     \fill (-1.5,-3.8)  circle[radius=2pt];
     
     \draw (-3.6,.0) node[left]{$1$};
     \fill (-3.54,.0)  circle[radius=2pt];
     \draw[thick] (-3.54,.0) -- (-1.5,-3.8);
     
     \fill [blue] (-2.5,-2.84)  circle[radius=3pt];
     \fill [blue](-1.65,-3.49)  circle[radius=3pt];
     
     \draw (-2.6,-3.2) node[left]{$G$};
     \draw (-2.3,0.1) node[left]{$\cH_1$};
     \path [->,draw,thick,gray] (-2.6, -.2) -- (-3.,-0.98);
     \draw (0.4,-2.5) node[left]{$\cH_3$};
     \path [->,draw,thick,gray] (0, -2.8) -- (-0.6,-3.45);
     \draw (-.4,-1.9) node[left]{$\cH_2$};
     \path [->,draw,thick,gray] (-1.1, -2.) -- (-3.1,-2.3);
\end{tikzpicture}
 \subcaption{}
\end{minipage}
\begin{minipage}[b]{0.5\linewidth} 
\centering
\begin{tikzpicture}[scale=.85,sibling distance=0pt]
     \pgfmathsetmacro{\e}{1.4}   
    \pgfmathsetmacro{\a}{1}
     \fill[dgn!60] (-3.54,-3.78) --(-3.54,-2.7) -- (-2.2,-3.8) -- cycle; 
    \draw[thick, ->] (-3.54,-3.8) -- (1.5,-3.8) node[right]{$z_1$};    
    \draw[thick, ->] (-3.54,-3.8) -- (-3.54,1.25) node[above]{$z_2$};   
    \draw[thick, dashed] (-3.54,-2.7) -- (-2.2,-3.8); 
 
     \draw (-3.54,-2.5) node[left]{$\frac{1}{3}$};
     \fill (-3.54,-2.5)  circle[radius=2pt];
     \draw[thick] (-3.54,-2.5) -- (0.5,-3.8);
     \draw (0.75,-4.2) node[left]{$1$};
     \fill (0.5,-3.8)  circle[radius=2pt];
     
     \draw (-1.25,-4.2) node[left]{$\frac{1}{2}$};
     \fill (-1.5,-3.8)  circle[radius=2pt];
     
     \draw (-3.6,.0) node[left]{$1$};
     \fill (-3.54,.0)  circle[radius=2pt];
     \draw[thick] (-3.54,.0) -- (-1.5,-3.8);
     
     \draw (-2.9,-3.4) node[left]{$G$};
     \draw (-2.3,0.1) node[left]{$\cH_1$};
     \path [->,draw,thick,gray] (-2.6, -.2) -- (-3.,-0.98);
     \draw (0.4,-2.5) node[left]{$\cH_3$};
     \path [->,draw,thick,gray] (0, -2.8) -- (-0.6,-3.45);
     \draw (-.4,-1.9) node[left]{$\cH_2$};
     \path [->,draw,thick,gray] (-1.1, -2.) -- (-3.1,-3.);
\end{tikzpicture}
 \subcaption{}
\end{minipage}
\begin{minipage}[b]{0.45\linewidth}  
\centering
\begin{tikzpicture}[scale=.85,sibling distance=0pt]
     \pgfmathsetmacro{\e}{1.4}   
    \pgfmathsetmacro{\a}{1}
     \fill[dgn!60] (-3.54,-3.78) --(-3.54,-2.52) -- (-2.5,-2.85) -- (-1.75,-3.41) -- (-1.5,-3.78) -- cycle; 
    \draw[thick, ->] (-3.54,-3.8) -- (1.5,-3.8) node[right]{$z_1$};    
    \draw[thick, ->] (-3.54,-3.8) -- (-3.54,1.25) node[above]{$z_2$};   
    \draw[thick, dashed] (-3.54,-2) -- (-1.3,-3.8); 
 
     \draw (-3.54,-2.5) node[left]{$\frac{1}{3}$};
     \fill (-3.54,-2.5)  circle[radius=2pt];
     \draw[thick] (-3.54,-2.5) -- (0.5,-3.8);
     \draw (0.75,-4.2) node[left]{$1$};
     \fill (0.5,-3.8)  circle[radius=2pt];
     
     \draw (-1.25,-4.2) node[left]{$\frac{1}{2}$};
     \fill (-1.5,-3.8)  circle[radius=2pt];
     
     \draw (-3.6,.0) node[left]{$1$};
     \fill (-3.54,.0)  circle[radius=2pt];
     \draw[thick] (-3.54,.0) -- (-1.5,-3.8);
     
     \fill [blue] (-2.5,-2.84)  circle[radius=3pt];
     \fill [blue](-1.65,-3.49)  circle[radius=3pt];

     \fill [orange] (-3.3,-2.2)  circle[radius=3pt];
     \fill [orange](-1.43,-3.63)  circle[radius=3pt];
     
     \draw (-2.6,-3.2) node[left]{$G$};
     \draw (-2.3,0.1) node[left]{$\cH_1$};
     \path [->,draw,thick,gray] (-2.6, -.2) -- (-3.,-0.98);
     \draw (0.7,-2.65) node[left]{$\cH_3$};
     \path [->,draw,thick,gray] (0, -2.8) -- (-0.6,-3.45);
     \draw (-.9,-.9) node[left]{$\cH_2$};
     \path [->,draw,thick,gray] (-1.3, -1.1) -- (-2.8,-2.5);
     \draw (1.75,0.7) node[left]{$Z^*: \begin{cases} \dt x_1 <0, \\ \dt x_2 =0, \\ \dt x_3 >0 \end{cases}$};
     \path [->,draw,thick,gray] (-1.1, .5) -- (-3.1,-2.1);
     \path [->,draw,thick,gray] (0, -1.8) -- (-1.3,-3.5);
     \draw (3.,-1.5) node[left]{$Z^{**}: \begin{cases} \dt x_1 >0, \\ \dt x_2 =0,\\ \dt x_3 <0 \end{cases}$};
\end{tikzpicture}
 \subcaption{}
\end{minipage}
 \caption{
 {\bf Where all species die out, Example~\ref{ex specific}.} The shaded (green) area is the intersection of $\cP$ and the three half-planes $\cH_1,\cH_2,$ and $\cH_3$ where the inequalities \eqref{ineq:3d} are all satisfied. {\black For simplicity, we assume $z_1,z_2 \in \cP$ and we only examine the part of $G$ that is in $\cP$.}
 The arrow for each $i$  points to the boundary of  $\cH_i$, where 
 $\dot x_i = 0$.
 In the interior of $G$, the inequalities are all negative, and all species die out. On the boundary of $G$, one species can survive. At each vertex of $G \in \cP$, two species can survive.
{\bf{Panel (A):}} In the green region in panel (A), $c_2 < - \frac{3}{5}$ and the inequality in the definition of $(\cH_2)$ is always negative so $x_2\to 0$ as $t\to\infty$,
 and there is one vertex (red dot) where $Z$ is on the boundary of two half-planes. There, $x_1$ and $x_3$ can both persist for that value of $(z_1,z_2)$. 
{\bf{Panel (B):}} There are two vertices (blue dots) on the  interior boundary of $G$. At one (upper blue dot), $x_1$ and $x_2$ can persist while at the other (lower blue dot) $x_2$ and $x_3$ can persist. {\bf{Panel (C):}} Only species $x_2$ survives.  {\bf{Panel (D):}} $Z^*$ and $Z^{**}$ are points on the dashed line where $\dot x_2=0$.}
 \label{fig:3 half planes}
\end{figure}


The boundaries of $\cH_1$ and $\cH_3$ intersect at $(\frac{1}{5},\frac{2}{5})$, which is in $\cH_2$ if $c_2 \le -\frac{3}{5}$. Notice $\nu\bigcdot C = 0$ is satisfied when the boundaries of the three half-planes intersect at one point (see Fig.~\ref{fig:3 half planes}). 

 {\bf When the die-out conclusion of Theorem \ref{thm:DieOut1} can fail and coexistences occurs:} $(2,-5,1)\cdot C=0$, \ie, $c_3 = -\frac{3}{5}$. For that $C$, the borders of $H_1, H_2$, and $H_3$ intersect, \ie, all $\dt x_i$ are $0$ at the one point where $z_1 = \frac{1}{5}, z_2 = \frac{2}{5}$. There the three species are constant. Any positive values of $x_1, x_2,x_3$ suffice. 

{\bf\BF Two cases, either $(2,-5,1)\bigcdot C = -3-5c_2 <0$ or $>0$.} In Fig.~\ref{fig:3 half planes}, panel (A) is for the case where $(2,-5,1)\bigcdot C <0$ and panel (B) is for $(2,-5,1)\bigcdot C > 0$. 

For panel (A), there is one vertex on the boundary of $G$ in $\cP$, and that corresponds to the vertex where both $x_1$ and $x_3$ survive while $x_2$ dies out.

\bf\BF {When neither $x_1$ nor $x_2$ dies out but $\min\{x_1(t), x_2(t)\} \to 0$ as $t \to \infty$.}
For panel (B) of Figure~\ref{fig:3 half planes} there are two vertices, the only two possible constant values of $Z$ for which two species persist. 
Both are on the boundary of $\cH_2$. In either case, $x_2$ persists while either $x_1$ or $x_3$ dies out.

In panel (C), both $x_1$ and $x_3$ die out. 
\end{example}

In Figure~\ref{fig:3 half planes}, points $Z^*$ and $Z^{**}$ both have $\dt x_2 = 0$; $Z^*$ has $\dt x_1<0$ and $\dt x_3>0$; and $Z^{**}$ has the reverse. 

By having resource $z(t)$ oscillate between levels $Z^*$ and $Z^{**}$, it is possible to have $x_1(t)$ and $x_2(t)$ oscillate with neither going to $0$, but $\min\{x_1(t),x_2(t)\}\to 0$ exponentially fast. The simplest case is where $z(t)$ discontinuously jumps back and forth between $Z^*$ and $Z^{**}$. 
Suppose $x_1(0)$ and $x_3(0)$ are less than some $m>0$. Suppose $z(t) = Z^{**}$ for an interval $J_1 = [0,T_1]$ where $T_1$ is chosen so that $x_1(t)$ rises to $m$ at $T_1$. Of course, $x_3$ is decreasing on $J_1$. 

Then on $J_2 = (T_1,T_2]$, $z(t) = Z^*$, and $T_2$ is chosen so that $x_3(t)$ rises to $m$ at $t=T_2$. 
We can keep oscillating between $Z^*$ and $Z^{**}$ so that one the two, $x_1$ or $x_3$ returns to $m$ at each interval end, $T_j$. This cannot be periodic became each $x_i$ must rise $T_{j+1}-T_j \to \infty$. 

\begin{example}[{\bf Five predators and two prey}]\label{ex: 2 7D}
 \normalfont
We could consider the $7$-dimensional LV model for five predators and two prey with the matrix.
Assume $\SLV = (s_{ij})$ is the corresponding matrix.
\begin{align}\label{S 7x7}
S_{LV}=\left[
    \begin{array}{ccccccc}
        s_{11} & s_{12} & s_{13} & s_{14}& s_{15} & s_{16} & s_{17} \\
        s_{21} & s_{22} & s_{23} & s_{24}& s_{25} & s_{26} & s_{27} \\ 
   s_{31} & s_{32} & 0 & 0 & 0 & 0 & 0\\
   \vdots & \vdots &  \vdots & \vdots &  \vdots & \vdots &  \vdots \\
   s_{71} & s_{72} & 0 & 0 & 0 & 0 & 0
    \end{array}
\right].
\end{align}
{\black The dimension of the kernel of this matrix is $3$. For almost every choice of $s_{ij}$, every null vector ($\nu_i$) of $S_{LV}^T$ can only have $\nu_i \neq 0$ if $i \in \{3,4,5,6,7\}$.} To show which species must die out (for a bounded trajectory), we need only consider Eq.~\eqref{eq:Z} whose matrix is
\begin{align*}
S_Z=\left[
    \begin{array}{cc}
   s_{31} & s_{32} \\
  \vdots & \vdots \\
   s_{71} & s_{72} 
    \end{array}
\right].
\end{align*}
{\black This is the smallest submatrix of $S_{LV}^T$ where kernel has dimension $3$.}
{\black Because each null vector of $S_{LV}^T$ can have non-zero coordinates $3, \ldots, 7$ and $S_{LV}$ maps those coordinates to coordinates $1$ and $2$. In other words, $S_Z$ captures the null space behavior of $S_{LV}$ and ignores the rest of the $S_{LV}$.}

This transpose matrix $S_Z^T$ has a 3-dimensional kernel for almost every choice of non-zero entries.  The coordinates of $Z = (z_1, z_2)$ correspond to either prey species or ``resources''. {\black These values can even be negative.}
Theorem~\ref{thm:DieOut1} asserts that at least three $x_i$ must die out exponentially fast for almost every $C$. Since the kernel coordinates are $3,4,5,6,7$, three of these must die out. These correspond to the predators in the biological interpretation.
Four other species, numbers 1 and 2, and two kernel coordinates can coexist for some choices of $S$, $C$, and $Z(t)$.

{\bf\BF Null vectors for Eq.~\eqref{S 7x7} and the Die-out Lyapunov functions. }
For almost every choice of the non-zero coefficients of $S$, the kernel is three-dimensional, and for those cases, the $\SLV^T$ null vectors have the form $(0,0,*,*,*,*,*)$ for model $LV$ where each `$*$' indicates a coefficient that can be non-zero. 
For Eq.~\eqref{eq:Z}, the first two coordinates are omitted, so
the $\SZ^T$ null vectors have the reduced form $(*,*,*,*,*)$ in which the coordinate numbers are still $\{3,4,5,6,7\}$.

Assuming the kernel is typically $k$-dimensional, we can choose any $k-1$ coordin\-ates, and, taking a linear combination of the kernel vectors, we can create a non-zero null vector whose entries for those $k-1$ coordinates are $0$, (Fact \ref{fact T}). It is possible that some other entries would also be 0. 
Since here the kernel is three-dimensional, for any two kernel coordinates $i$ and $j$, there is a non-zero null vector $\nu$ for which $\nu_i=0$
and $\nu_j=0$, leaving at most  three non-zero coordinates.


Let the $i^{th}$, $j^{th}$ and $u^{th}$ coordinates be non-zero for $i,j,u \in\{3,4,5,6,7\}$.  
There are $\binom{5}{3}=10$ such choices of null vectors that have $3$ non-$0$ coordinates. We display three of these as samples:
\begin{align*}
\begin{split}
    \nu^{(367)} &= \pm[p_3,0,0,p_6,p_7],  \\
    \nu^{(467)} &= \pm[0,q_4,0,q_6,q_7],   \\
    \nu^{(567)} &= \pm[0,0,r_5,r_6,r_7],  
    \end{split}
\end{align*}
where
\begin{equation*}
  \begin{split}
    p_3 &= s_{61}s_{72} - s_{62}s_{71}, \\
    p_6 &= s_{32} s_{71} - s_{31}s_{72}, \\
    p_7 &= s_{31}s_{62} - s_{32} s_{61},
  \end{split}
\ \
\&
\ \ 
  \begin{split}
    q_4 &= s_{61}s_{72} - s_{62}s_{71}, \\
    q_6 &= s_{42} s_{71} - s_{41}s_{72}, \\
    q_7 &= s_{62}s_{41} - s_{61}s_{42}. 
  \end{split} 
\ \
\&
\ \ 
    \begin{split}
    r_5 &= s_{61}s_{72} - s_{62}s_{71}, \\
    r_6 &= s_{52}s_{71} - s_{51}s_{72}, \\
    r_7 &= s_{62}s_{51} - s_{61}s_{52}. 
  \end{split}
\end{equation*}
The sign $\pm$ of $\nu$ above is chosen so that $\nu \bigcdot C < 0$, a choice which is possible for almost every $C$, (see Fact \ref{fact T is lines}). Below we omit the $\pm$, leaving it to the reader.


These three yield the following are the die-out Lyapunov functions:   
    \begin{align*}
    \begin{split}
    \Lambda_{\nu^{(367)}}(x_3,x_6,x_7) &= p_3 \ln x_3 + p_6 \ln x_6 + p_7 \ln x_7  \\
    \Lambda_{\nu^{(467)}}(x_4,x_6,x_7) &= q_4 \ln x_4 + q_6 \ln x_6 + q_7 \ln x_7,  \\
    \Lambda_{\nu^{(567)}}(x_5,x_6,x_7) &= r_5 \ln x_5 + r_6 \ln x_6 + r_7 \ln x_7. \end{split}
    \end{align*}
for which we obtain
\begin{align*}
\begin{split}
    \dt \Lambda_{\nu^{(367)}} &= p_3c_3 + p_6 c_6 + p_7 c_7, \\ 
     \dt \Lambda_{\nu^{(467)}} &= q_4c_4 + q_6 c_6 + q_7 c_7, \\
     \dt \Lambda_{\nu^{(567)}} &= r_5c_5 + r_6 c_6 + r_7 c_7,
     \end{split}
    \end{align*}
{\black Each of the ten  $\Lambda_{\nu}$ with only three non-zero coordinates tells us that at least one of its three variables must die out. Knowledge of the signs of the coefficients and of $\Lambda \bigcdot C$ will give additional information as to which must die out. Together they guarantee that at least three species must die out for almost every $C$. }   

We note that we can also write the die-out Lyapunov functions in the form 
\begin{align*}
    V(X)=:\exp(\Lambda_{\nu^{(367)}}(X)) =:x_3^{p_3} x_6^{p_6} x_7^{p_7}. 
\end{align*}
though then $\dt V(x)$ is not constant.

\end{example}

\begin{example}[{\bf {A $14$-species ecosystem and its graph,
Fig.~\ref{fg:14-trophic}}}]\label{ex:26D}
\label{sec:example 3}
\normalfont 
This example is closely related to Example~\ref{ex: 2 7D} in that the $S$ in both examples have a 3-dimensional kernel, so at least 3 species must die out.

In Fig.~\ref{fg:14-trophic},  there are 14 nodes or species, but when we use Eq.~\eqref{eq:Z}, we only use $d=8$ equations.  Let $X=(x_1,\dots,x_8)$ represent the densities of the 8 blue nodes $\{4,5,\ldots, 11\}$;
and $Z=(z_j)_{j=1}^5$ (so $d' =5$) corresponding to the red nodes  $\allowbreak\{1,2,3,12,13\}$.
Each of the edges from 8 blue nodes end at one of the 5 red nodes.
The $8$ blue species depend only on the $5$ red species, and it is this aspect that we capture in Eq.~\eqref{eq:Z}. 
Applying  Eq.~\eqref{eq:Z} to model the graph, the matrix $S$ is $8 \time 5$ and so has a kernel whose dimension is at least $3$. 
We conclude that 
of the 8 kernel nodes shown in blue,  at least $3$ must die out.
Note that some red nodes are in a trophic level higher than blue nodes while others are below.

For almost every choice of the matrix $S_Z$ there are $\binom{8}{6}=28$ ways of choosing a minimal-support null vector $\nu$ of $\SZ^T$  so that it has only $6$ non-zero coordinates. Hence, there are $28$  die-out Lyapunov functions, $\Lambda_{\nu}$.
For almost every $C$, at least three of the corresponding $8$ species must die out. This is generally not enough information to indicate which three of the $8$ must die out, but it gives a lot of hints. Which species die out depends on the specific values of $C, S$, and $Z(t)$.  
\end{example}

\section{The existence of bounded solutions and a trapping region for (generalized) Lotka-Volterra models}\label{sec:trapping region}

Theorem~\ref{thm:trap} {\black gives conditions that guarantee all solutions are bounded}.
In this Section, we use a different kind of Lyapunov function that we call a trapping-region Lyapunov function. Then we use it to prove Theorem~\ref{thm:trap}. 

In contrast with die-out Lyapunov functions, our second application of Lyapunov functions is to establish that for what we call ``trophic'' Lotka-Volterra systems, there is a globally attracting trapping region. \cite{lorenz1963deterministic} created such a ``trapping Lyapunov function'' to show his famous differential equations system has a globally attracting trapping region.
It says that if a Lotka-Volterra system satisfies our ``trophic'' condition, Def.~\ref{trophic-def}, then there is a bounded globally attracting trapping region, a region that solutions cannot leave, 
and all solutions are bounded.

We create a function $V$ of the form  $V(X) = \sum_{j=1}^d \varepsilon^j x_j$ for some $\varepsilon>0$, such that (i) there is  a positive constant $\lambda$ for which
$\dt V(X(t)) < 0$ whenever $V(X(t)) \ge \lambda$, and (ii) $V(X)\to\infty$ as $|X|\to\infty$. Then the set of $X$ for which $V(X) \le \lambda$ is a bounded trapping region.

The proof  of Theorem~\ref{thm:trap} follows from the following proposition and a lemma. 
 
\begin{figure}[htbp]
\begin{center}
\centering
\begin{tikzpicture}
\shadedraw [bottom color=gray!60, top color=gray!60]
               (-3.54,-0.7) --++ (0.,-3.1) --++ (2.06,0) -- cycle;
    \pgfmathsetmacro{\e}{1.4}   
    \pgfmathsetmacro{\a}{1}
    \draw[thick, ->] (-4.5,-3.8) -- (5.5,-3.8) node[right]{$x_i$};    
    \draw[thick, <-,  line width=0.75mm] (2,-3.8) -- (3,-3.8) node[right]{};    
    \draw[thick, ->] (-3.54,-4.5) -- (-3.54,2) node[above]{$x_j$};   
    \draw[thick, ->,  line width=0.75mm] (-3.54,0.5) -- (-3.54,-0.5) node[above]{};   
    \draw[thick, ->,  line width=0.75mm] (0.4,-0.5) -- (-0.4,-1.5) node[above]{};   
    \draw[thick] (-3.54,0.7) -- (4.5,-3.78);   
    \draw[thick] (-3.54,-0.7) -- (-1.5,-3.78); 
    \draw[thick] (-1.5,1) -- (-3.54,-3.78); 
    \draw (1,0) node[right]{$\dt V<0$};
    \draw (1.8,-2) node[right]{$V=\lambda$};
    \draw (-1.,-2.8) node[right]{$\Gamma_\lambda$};
    \draw (-3.2,-3.2) node[right]{$\dt V>0$};
    \draw (-2.9,1) node[right]{$\mathcal E= \nabla V$};
\end{tikzpicture}
 \caption{ Assume $\lambda$ sufficiently large that when $\protect \dt V(X) \le 0$, $V(X)< \lambda$. 
Then $\Gamma_{\lambda} = \{X: V(X) \le \lambda\}$ is a globally attracting trapping region and no trajectories in it can leave it. Every trajectory in $\cP$ eventually enters it. If there is a fixed point $X_0$ in $\cP$, then $X_0$ must be on the boundary of the shaded region.}
 \label{fig:4}
\end{center}
\end{figure}
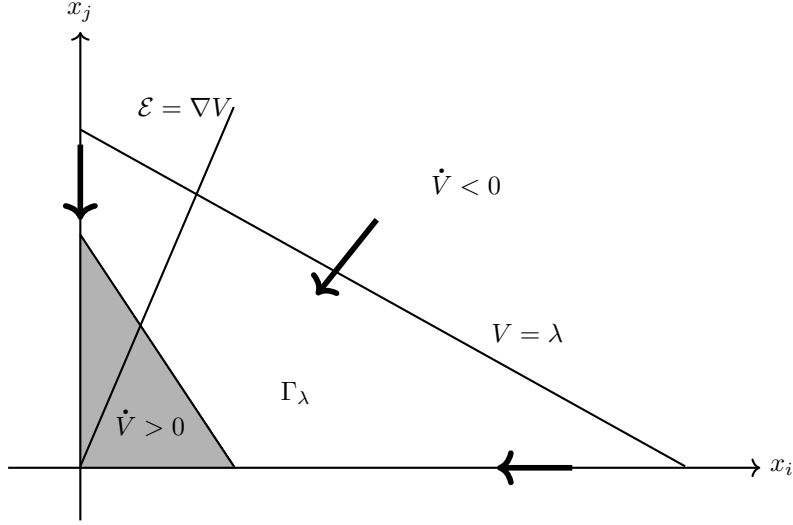

\begin{prop}[{\bf{A trapping Lyapunov function $V$ for trophic systems}}] \label{TLF}
Assume system Eq.~ \eqref{eq:LV} is trophic.
Consider system \eqref{eq:LV}.
Let $\mathcal E := (\varepsilon, \varepsilon^2, \ldots, \varepsilon^{d})$.
Let  
\begin{align}\label{vdot}
    V(X) := \sum_{n=1}^d \varepsilon^{n}x_n = \mathcal E \bigcdot X.
\end{align}
Then for some $\varepsilon>0$ and $\lambda >0$, $\dt V(X) <0$ when $V(X) \ge \lambda$  (See Fig.~\ref{fig:4}). Define
\begin{align*}
    \Gamma_\lambda := \{X \in \bcP : V(X) \le \lambda\}.
\end{align*}
Then $\Gamma_\lambda$ is a bounded globally attracting trapping region for $\bcP$.
\end{prop}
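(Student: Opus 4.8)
The plan is to compute $\dt V$ explicitly, split it into its linear and quadratic parts, and show that the trophic hypotheses force the quadratic part to be negative semidefinite on $\bcP$ once $\varepsilon$ is small, with the linear part supplying the missing negativity in exactly the directions where the quadratic part degenerates. First I would write
\begin{align*}
\dt V(X) = \sum_{n=1}^d \varepsilon^n c_n x_n + \sum_{n=1}^d\sum_{j=1}^d \varepsilon^n s_{nj}\, x_n x_j =: L(X) + Q(X),
\end{align*}
collect the quadratic part by unordered pairs, and record that the coefficient of $x_n x_j$ (for $n>j$) is $\varepsilon^n s_{nj} + \varepsilon^j s_{jn}$, while the coefficient of $x_n^2$ is $\varepsilon^n s_{nn}$.

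The core of the argument is choosing $\varepsilon \in (0,1)$ so that every coefficient of $Q$ is $\le 0$ on $\bcP$. By (T2), positive entries of $S$ occur only below the diagonal, so every above-diagonal entry $s_{jn}$ (with $j<n$) is $\le 0$; thus if the below-diagonal partner $s_{nj}\le 0$ the pair coefficient is automatically $\le 0$. The only dangerous pairs are those with $s_{nj}>0$, and for these (T2) guarantees $s_{jn}<0$ strictly; writing the coefficient as $\varepsilon^j(\varepsilon^{\,n-j}s_{nj}+s_{jn})$ and using $n-j\ge 1$, it becomes negative once $\varepsilon$ is small. Since there are finitely many pairs, a single small $\varepsilon$ works for all of them. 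Because (T2) also forces $s_{nn}\le 0$, the diagonal coefficients are $\le 0$ as well, so after discarding the (nonpositive) cross terms we get $Q(X)\le \sum_{n:\,s_{nn}<0}\varepsilon^n s_{nn} x_n^2 \le 0$ on $\bcP$.

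Next I would combine $Q$ with $L$ coordinatewise. For indices with $s_{nn}<0$ the contribution $\varepsilon^n x_n(c_n + s_{nn}x_n)$ is a downward parabola in $x_n$, hence bounded above and tending to $-\infty$ as $x_n\to\infty$; for indices with $s_{nn}=0$, hypothesis (T1) gives $c_n<0$, so the surviving linear term $\varepsilon^n c_n x_n$ is $\le 0$ and tends to $-\infty$ as $x_n\to\infty$. This is the step where the whole trophic structure is used, and it is the main obstacle: the quadratic form is only negative \emph{semi}definite, and one must see that (T1) supplies coercivity in precisely the degenerate directions $s_{nn}=0$. Summing these single-variable bounds (each summand bounded above and driven to $-\infty$ by its own coordinate) shows $\dt V(X)\to-\infty$ as $\|X\|\to\infty$ in $\bcP$.

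Finally I would convert this into the statement of the proposition and run the standard trapping-region argument. Since $0<\varepsilon<1$ gives $\varepsilon^d\|X\|_1 \le V(X)\le \varepsilon\|X\|_1$ on $\bcP$, the function $V$ is coercive and its sublevel sets $\{X\in\bcP: V(X)\le M\}$ are compact; in particular $\{V\ge\lambda\}$ lies in $\{\|X\|_1\ge \lambda/\varepsilon\}$, so choosing $\lambda$ large enough puts $\{V\ge\lambda\}$ inside the region where $\dt V<0$, proving the first assertion. For the trapping claim, positive invariance of $\Gamma_\lambda$ follows because $\dt V<0$ on $\{V=\lambda\}$ prevents any trajectory from crossing the level set outward; global attraction follows because on the compact set $\{\lambda\le V\le V(X(0))\}$ the continuous function $\dt V$ is bounded above by some $-\eta<0$, so $V(X(t))$ decreases at least linearly and reaches $\lambda$ in finite time. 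Boundedness of $\Gamma_\lambda$ is immediate from coercivity, and the same bound on $V$ along trajectories rules out finite-time blow-up, so solutions exist for all $t\ge 0$ and the argument is complete.
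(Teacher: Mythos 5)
Your proposal is correct, and its core mechanism coincides with the paper's: the choice of $\varepsilon\in(0,1)$ using (T2) so that each dangerous cross coefficient $\varepsilon^{n-j}s_{nj}+s_{jn}$ (pairs with $s_{nj}>0$, hence $n>j$ and $s_{jn}<0$) becomes nonpositive is exactly the paper's choice of $\varepsilon$, and both arguments then rely on (T1) to make the surviving self-interaction terms supply the decay. Where you diverge is in the bookkeeping and in the passage from ``$\dt V<0$ far out'' to global attraction. The paper partitions indices by the sign of $c_n$ (the set $\mathcal M=\{n:c_n\ge0\}$), invokes Lemma~\ref{lem:quadratic} to replace each $c_nx_n+s_{nn}x_n^2$ with $n\in\mathcal M$ by $a_n-x_n$, and thereby derives the affine differential inequality $\dt V(X)\le A-BV(X)$; comparison with $v'=A-Bv$ then yields the trapping threshold ($\lambda>\lambda_0=A/B$) and global attraction in one stroke, with a quantitative asymptotic bound $\limsup_{t\to\infty}V(X(t))\le A/B$. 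You instead partition by the sign of $s_{nn}$ (using the contrapositive of (T1): $s_{nn}=0\Rightarrow c_n<0$), prove only the qualitative coercivity statement $\dt V(X)\to-\infty$ as $\|X\|_1\to\infty$ in $\bcP$, and then recover attraction by compactness ($\dt V\le-\eta<0$ on the compact set $\{\lambda\le V\le V(X(0))\}\cap\bcP$, hence entry into $\Gamma_\lambda$ in finite time). Both routes are valid; the paper's buys an explicit rate and threshold, while yours is more elementary (no quadratic-completion lemma) and has the merit of spelling out two points the paper leaves implicit: positive invariance of $\Gamma_\lambda$ via the no-outward-crossing argument at the level set $\{V=\lambda\}$, and the fact that the a priori bound on $V$ along trajectories rules out finite-time blow-up, so that ``every trajectory is eventually in $\Gamma_\lambda$'' is meaningful.
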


{\bf A Puzzle.} The choice of the vector $\mathcal E$ in Eq.~\eqref{vdot} is far from optimal. 
The trapping regions are not as small as possible. 
The reader may wish to find better choices that better approximate the best trapping region.

The proof will show there exist constants $\alpha>0$ and $\beta >0$ such that
\begin{align*}
\dt V(X) \le \alpha - \beta V(X).
\end{align*}
For any initial point $x_0 \in \bar\cP$ at $t=0$, we can find an upper bound $v(t)$ for $V(X(t))$ for $t \ge0$ by setting $v(0) = V(X(0))$ and $v' = \alpha-\beta v$. 
Hence $v(t) \to \frac{\alpha}{\beta}$ as $t \to \infty$, so $\limsup_{t\to \infty} V(X(t)) \le \frac{\alpha}{\beta}$. That identifies the trapping region $V \le \lambda$ where $\lambda > \frac{\alpha}{\beta}$.

\begin{lemma}\label{lem:quadratic}
Assume $c \ge 0$ and $s < 0$. Then there exists an $a$, namely, $a =-\frac{(c+ 1)^2}{4s} >0$, such that for all $x$, $c x + s x^2 \le a - x$.
\end{lemma}

\begin{proof}
Assume $c \ge 0$ and $s < 0$. We need to show
\begin{align*}
    cx+sx^2 +x + \frac{(c+ 1)^2}{4s} \le 0,
\end{align*}
which is true since the left-hand side is
\begin{align*}
    \frac{1}{s}\Big(sx + \frac{c+1}{2}\Big)^2 \le 0.
\end{align*}
\end{proof}

\begin{proof}[Proof of Prop.~\ref{TLF}]
If $s_{nm}>0$, then by {\it{(T2)}}, $n>m$. Hence we can choose $\varepsilon \in (0,1)$ so that for all $n$ for which $s_{nm}>0$,
\begin{align*}
  \varepsilon^{(n-m)} s_{nm} + s_{mn} \le 0.
\end{align*}
For $x_n'$ in Eq.~\eqref{eq:LV} and $V$ in \eqref{vdot} $\dt V = \sum_{n=1}^d \varepsilon^{n} x_n'$. Let $\mathcal M := \{n: c_n \ge 0\}$. Then for $X \in \cP$, we can write $\dt V(X)= W_1(X) + W_2(X) + W_3(X)$ where

\begin{align*}
      W_1(X) &= \sum_{n \in \mathcal M} \varepsilon^{n} (c_n x_n  +
      s_{nn} x_n^2), \\
      W_2(X) &= \sum_{n \notin \mathcal M} \varepsilon^{n} (c_n x_n  +
      s_{nn} x_n^2) \le \sum_{n \notin \mathcal M} \varepsilon^{n} c_n x_n \le 0
\end{align*}
{\black from {\it{(T1)}} since $c_n<0$ for $n \notin \mathcal M$. }      
\begin{align*}
      W_3(X) &= \sum_{n=1}^d \sum_{\substack{m=1 \\ m\neq n}}^{d} \varepsilon^{n} s_{nm}x_n x_m \le  \sum_{m=1}^{d-1} \sum_{{n>m}}^{d} [\varepsilon^{n-m} s_{nm} + s_{mn}] x_n x_m \le 0
\end{align*}
{\black from {\it{(T2)}} since $n>m$ implies $s_{mn}\le0$, and if $s_{nm}>0$, then $s_{mn}<0$, so for $\varepsilon$ sufficiently small, $\varepsilon^{n-m}s_{nm}+s_{mn}<0$. }

Hence $V(X) \le W_1(X)$. In $W_1(X)$, since $n \in \mathcal M$, $c_n \ge 0$, so $s_{nn}<0$. By Lemma \ref{lem:quadratic}, there exists $a_n >0$ such that 
\begin{align*}
    c_n x_n  +
      s_{nn} x_n^2 \le a_n - x_n,
\end{align*}
Then
\begin{align*}
    W_1(X) \le \sum_{n \in \mathcal M} \varepsilon^{n} (a_n - x_n).
\end{align*}

Let
\begin{align*}
e_n =
    \begin{cases}
     \varepsilon^{n} \ \  &\mbox{ if } n \in \mathcal M, \\
    - \varepsilon^{n} c_n. \ \ &\mbox{ if } n \notin \mathcal M.
    \end{cases}
\end{align*}
{\black Notice $e_n>0$ for all $n$ when $\varepsilon>0$ since $c_n<0$ for $n\notin \mathcal M$.} Let $A :=\sum_{n \in \mathcal M} \varepsilon^{n}a_n$. {\black If $\mathcal M$ is the empty set, let $A :=0$.}
\begin{align*}
\begin{split}
    \dt V(X) &\le \sum_{n \in \mathcal M} \varepsilon^{n} (a_n - x_n) + \sum_{n \notin \mathcal M} \varepsilon^{n} c_n x_n \\
    & \le \sum_{n \in \mathcal M} \varepsilon^{n} (a_n - x_n) -  \sum_{n \notin \mathcal M} e_n x_n  \\
    &\le \sum_{n \in \mathcal M} \varepsilon^{n}a_n -  \sum_{n \in \mathcal M} \varepsilon^{n} x_n - \sum_{n \notin \mathcal M} e_n x_n
    \le A - \sum_{n=1}^d e_n x_n. 
    \end{split}
\end{align*}

{\black For some sufficiently small $B>0$}, $\sum_{n=1}^d e_n x_n \ge BV(X)$, so $\dt V(X) \le A - BV(X)$.
Choose $\lambda_0$ so that $A-B\lambda_0 =0$.
Let $\lambda > \lambda_0$. Then $\dt V(x) <0$ when $V(x) \ge \lambda$, so  
\begin{align*}
    \Gamma_\lambda := \{X \in \bcP : \mathcal E \bigcdot X \le \lambda\}
\end{align*}
is a bounded global trapping region for $\bcP$.
\end{proof}

{\black If $\mathcal M$ is the empty set, \ie, $c_n <0$ for all $n$, then $A=0$, and $\dt V(X)<0$ for all $X\neq0$ in 
$\bar \cP$. Hence, for every solution $X$, $X(t) \to 0$ as $t \to \infty$. }

\begin{proof}[Proof of Theorem~\ref{thm:trap}]
By Prop.~\ref{TLF}, there exists a $\bcP$ globally attracting trapping region $\Gamma_{\lambda}$, for some $\lambda$. No trajectory can leave the region $\Gamma_{\lambda}$.
Hence, each trajectory is bounded. By the LaSalle-Barbashin-Krasovskii method (see \cite{alligood1996chaos}), if $\dt V(X(t))<0$ for all $t$, then $\dt V = 0$ at each limit point of $X$, 
so $X(t) \to \Gamma$ since all limit points of $X(t)$ are in $\Gamma$, and if there exists $t_0 \ge 0$ for which $\dt V(X(t_0))\ge 0$, then $X(t_0) \in \Gamma_{\lambda}$, and $X(t)$ remains there for $t \ge t_0$. 
\end{proof}

\black
\begin{example}\normalfont
In this example, {\it{(T1)}} is not satisfied and there is no attracting trapping region. 
The original basic two-dimensional Lotka-Volterra system has prey and predator species with population densities $x_1$ and $x_2$,
\begin{align}\label{std system}
\begin{split}
    \dt x_1 &= c_1x_1 + s_{12} x_1x_2, \\
    \dt x_2 &= c_2x_2 + s_{21} x_1x_2,
\end{split}
\end{align}
where $c_1>0>c_2$ and $s_{21}>0>s_{12}$. 
This is not a trophic system because $s_{11}=0$ so {\it{(T1)}} is not satisfied. There is no attracting trapping region because all solutions with $x_1,x_2>0$ are periodic except for the steady state $x_1=-\frac{c_2}{s_{21}}$, $x_2=-\frac{c_1}{s_{12}}$. Both numbers are positive.

The standard Lyapunov function for the system \eqref{std system} is 
\begin{align} 
    V(x_1,x_2):=s_{21}x_1+c_2\ln(x_1)- s_{12}x_2 - c_1 \ln(x_2),
\end{align}
where $x_1,x_2>0$. 
By direct calculation $\dt V(x_1,x_2) \equiv 0$. For each constant $v>0$, the set where $V(x_1,x_2)=v$ is a periodic orbit. Hence there are no globally attracting trapping regions. 

Since {\it{(T2)}} is satisfied, we conclude {\it{(T1)}} is essential for Theorem~\ref{thm:trap}.
\end{example}

\black
\section{Discussion}\label{sec discussion}

 Some of the readers of the preliminary drafts of this paper asked us which components are new, so here we summarize, including some points that were mentioned above. 
For Theorem~\ref {thm:trap}, the result is new. The technique of using a Lyapunov-like function for establishing a globally attracting trapping region – is not new, and we referred to Ed Lorenz’s use of such a function in his famous paper, \cite{lorenz1963deterministic}.

Theorem~\ref{thm:DieOut1} is also new. Most of the techniques are also new. Lyapunov functions using log terms are very common. We were motivated to use kernel vectors by papers by one of us, 
\cite{jahedi2022global,YorkeSaureJahedi}, papers that are not about differential equations. Die-out Lyapunov functions were motivated by our joint paper, \cite{akhavan2020population}
which featured a die-out Lyapunov function of the form $V=\frac{x_1}{x_2}$ where the space was 4-dimensional. 
Those papers are mentioned above. When dealing with the kernel of a matrix, our approach is to find all of the null vectors $\nu$ that have as few non-zero components as possible. Each yields a different $\Lambda_\nu$. All of these functions are needed to prove the theorem. A ``Team’’ of Lyapunov functions seems new. There are many papers on Lyapunov functions, but we have never seen anything like this. The idea of using each of those null vectors to create a die-out Lyapunov function seems to us unique.

\bigskip

{\bf When trajectories have no limit points in the open set \BF$\cP$.} 
{\black When a bounded trajectory $X(t)$ has some coordinates dying out as in Theorem~\ref{thm:DieOut1}, each of its limit points $X^*$ will have some coordinate(s) $x^*_i=0$. Hence, $X^*$ is in the closed set $\bcP$ but not in the open set $\cP$.}
As in Prop.~\ref{prop1}, assume $F$ is $C^1$ in the differential equation \eqref{DE}, where $F$ is defined on
an open set $U\subset \R^d$. Let  $X(t)=(x_1,\ldots,x_d)(t)\in U$ be a trajectory and let $V:U\to\R$ be differentiable.
Write $v(t)=V(X(t))$. Standard Lyapunov function theorems including Prop.~\ref{prop1} and the
Barbashin-Krasovskii-LaSalle Theorem (See \cite[p.~309]{alligood1996chaos}) conclude that $v(t)$ is monotonically decreasing. They 
make assumptions on $V$ that determine the behavior of $v(t)$. That is not the only approach. 

Taking the $\dt V$ definition Eq.~\eqref{dotV} a step further,
for a differential equation  \eqref{DE},
define the second Lyapunov derivative
\begin{align*}\label{ddtV}
 \ddt V(X) =\nabla \dt V(X) \cdot F(X) \text{ so that }\frac{d^2}{dt^2}V(X(t))=\ddt V(X(t)).
\end{align*}
Jacobi introduced this concept in 1840 for his ``stability criterion'' for the $N$-body problem (See \cite[p.118]{wilson1973lyapunov}).

Other higher Lyapunov derivatives are defined analogously. 
Some papers such as \cite{butz1969higher} and  \cite{ahmadi2011higher} consider the higher order derivatives $\dt V(X), \ddt{V}(X)$, and $\dddt{V}(X)$ to establish globally asymptotic stability.
Also, there are publications that discuss multiple Lyapunov functions but in a different context of the stability of a fixed point (see \cite{branicky1998multiple}, \cite{lakshmikantham2013vector}). 

The theorem in  
\cite{yorke1970theorem} uses the second derivative of $V(X)$, eliminating assumptions such as ``$V \ge 0$'' and ``$V$ is unbounded'' and ``$\dt V \le 0$'' while still obtaining the same conclusion about trajectories as in Prop.~\ref{prop1}.

The following striking result has conclusions modeled on Prop.~\ref{prop1}. We include it to demonstrate how even apparently weak conditions can result  in trajectories having no limit points in the interior of the domain of a differential equation.

\begin{prop}[{\black in the spirit of} \cite{yorke1970theorem}]\label{prop V double dot} 
Assume there exists a $C^1$ differential equation \eqref{DE} where $F$ is defined on
a simply connected open set $U\subset \R^d$ and $F$ is $C^1$. Assume \\
($A'_1$) there is a trajectory $X(t)=(x_1,\ldots,x_d)(t)\in U$ for all $t\ge 0$, and\\
($A'_2$) $V:U\to\R$ is $C^2$, and for each $X\in U$ either $\dt V(X) \ne 0$ or $\ddt V(X) \ne 0$.\\
Then\\
($B'_1$) $X(t)$ has no limit points in $U$ as $t\to\infty$.\\
($B'_2$) If furthermore $U=\cP$ and the trajectory is bounded, 
then $\displaystyle{\min_{1\le i\le d} x_i(t)}\to 0$ as $t\to\infty$.
\end{prop}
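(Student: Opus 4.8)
The plan is to reduce $(B'_2)$ to $(B'_1)$ exactly as $(B_2)$ follows from $(B_1)$ in Proposition~\ref{prop1}: if $\min_i x_i(t)\not\to 0$, boundedness produces a sequence $t_n\to\infty$ with $X(t_n)$ converging to a point $X^*$ all of whose coordinates are positive, hence $X^*\in\cP=U$ is a limit point in $U$, contradicting $(B'_1)$. So the real work is $(B'_1)$, and for it I would argue by contradiction: suppose $X^*\in U$ is a limit point of $X(t)$. Set $v(t):=V(X(t))$, so that $v'(t)=\dt{V}(X(t))$ and $v''(t)=\ddt{V}(X(t))$. Evaluating $(A'_2)$ along the trajectory shows $v'$ and $v''$ never vanish simultaneously; hence every zero of $v'$ is a nondegenerate (strict) local extremum, the critical points of $v$ are isolated, and maxima and minima alternate. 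Exactly one of two cases then occurs: either $v$ is eventually monotone, or $v$ has infinitely many local extrema.

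First I would dispose of the eventually monotone case, which is closest in spirit to Proposition~\ref{prop1}. Since $X(t_n)\to X^*$ gives $v(t_n)\to V(X^*)=:L$ and $v$ is eventually monotone, $v(t)\to L$. Writing $\phi(s,\cdot)$ for the flow of \eqref{DE}, continuous dependence on initial conditions (valid since $F$ is $C^1$) gives, for each small $s\ge 0$, $v(t_n+s)=V(\phi(s,X(t_n)))\to V(\phi(s,X^*))$; but $t_n+s\to\infty$ forces the same limit to equal $L$. Hence $s\mapsto V(\phi(s,X^*))$ is constant $\equiv L$ near $0$, so its first and second $s$-derivatives vanish at $0$, i.e. $\dt{V}(X^*)=\ddt{V}(X^*)=0$, contradicting $(A'_2)$.

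The heart of the proof, and the place where simple connectedness is essential, is the oscillatory case. I would consider the map $H:=(\dt{V},\ddt{V}):U\to\R^2\setminus\{0\}$, which is continuous and nowhere zero by $(A'_2)$. Because $U$ is simply connected, the argument of $H$ lifts to a single-valued continuous $\theta:U\to\R$ with $H/|H|=(\cos\theta,\sin\theta)$. The key observation is that $H(X(t))=(v'(t),v''(t))$ is the velocity of the planar curve $t\mapsto(v(t),v'(t))$, so $\theta(X(t))$ is a continuous determination of that curve's tangent angle. At each local maximum of $v$ one has $v''<0$ and at each local minimum $v''>0$ (these are the only admissible nondegenerate extrema), so the curve crosses the axis $\{v'=0\}$ downward at maxima and upward at minima; consequently the tangent angle turns by exactly $-2\pi$ over each full oscillation (one minimum to the next), while staying in a band of width less than $2\pi$ in between. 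Thus $\theta(X(t))\to-\infty$. But $\theta$ is bounded on any compact neighborhood $K\subset U$ of $X^*$, and since $X^*$ is a limit point the trajectory reenters $K$ at arbitrarily large times; this keeps $\theta(X(t))$ in the bounded set $\theta(K)$ at arbitrarily large times, contradicting $\theta(X(t))\to-\infty$. This completes $(B'_1)$.

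The main obstacle I anticipate is making the turning estimate rigorous: one must show the net change of the tangent angle per oscillation is a fixed $-2\pi$ with a controlled within-oscillation excursion, so that $\theta(X(t))$ genuinely diverges rather than merely oscillates. Simple connectedness is exactly what makes $\theta$ single-valued and cannot be dropped: the harmonic oscillator $\dt{x}=y,\ \dt{y}=-x$ with $V(x,y)=x$ satisfies $(A'_2)$ on the punctured plane $\R^2\setminus\{0\}$ (there $\dt{V}=y$ and $\ddt{V}=-x$ never vanish together), yet every orbit is periodic with all its points limit points; the obstruction is precisely the non-contractible loop encircling the origin, where $\dt{V}$ and $\ddt{V}$ would simultaneously vanish.
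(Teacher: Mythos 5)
Your proof is correct, but it follows a genuinely different route from the paper's. The paper only sketches its argument, deferring to \cite{yorke1970theorem}: there one studies the zero set $\{X\in U:\dt V(X)=0\}$ (a $C^1$ hypersurface, since $\ddt V=\nabla\dt V\cdot F\ne0$ forces $\nabla\dt V\ne0$ on it), uses simple connectedness to show each connected component separates the domain into two pieces, and uses the constant sign of $\ddt V$ on a component to show a trajectory can cross it at most once and only in one direction, which is incompatible with recurrence to a limit point. You instead dichotomize on $v(t)=V(X(t))$: the eventually-monotone case is eliminated by continuous dependence on initial conditions, forcing $\dt V(X^*)=\ddt V(X^*)=0$ at a limit point $X^*$; the oscillatory case is eliminated by lifting the angle of the nowhere-zero map $(\dt V,\ddt V):U\to\R^2\setminus\{0\}$ to a single-valued continuous $\theta:U\to\R$ --- so for you simple connectedness enters through the $S^1$-lifting criterion rather than through a separation theorem --- and showing $\theta(X(t))\to-\infty$ while recurrence would keep it bounded on a compact neighborhood of $X^*$. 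The turning estimate you flag as the main obstacle does hold rigorously: while $(v,v')$ is in the open upper (resp.\ lower) half-plane, the lifted angle is trapped in a single component of the preimage of the open right (resp.\ left) half-circle, and the nondegenerate axis crossings pin it at values $\equiv-\pi/2$ (maxima) and $\equiv\pi/2$ (minima) $\pmod{2\pi}$, so each min-to-min oscillation decreases $\theta$ by exactly $2\pi$ (the within-oscillation band has width exactly $2\pi$, not less, which changes nothing). As for what each approach buys: the paper's argument exhibits $\{\dt V=0\}$ as a family of one-way membranes --- the geometric picture behind Yorke's original theorem --- but requires a Jordan--Brouwer-type separation fact for possibly non-compact hypersurfaces; yours is more self-contained, needing only standard ODE continuity and covering-space lifting, and your punctured-plane harmonic-oscillator example cleanly shows that simple connectedness cannot be dropped from either formulation.
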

This result generalizes Prop.~\ref{prop1} since its requirement that $\dt V <0$ is a special case of condition $(A'_2)$ here, and in that we do not assume $U$ is invariant. 

This proposition seems to be virtually devoid of assumptions about $V$ that are useful in characterizing $v(t)=V(X(t))$ since $V(X(t))$ is allowed to have even an infinite number of local maxima and minima, as in the following toy example. 
There is even no assumption that $U$ is invariant.

For a trivial example consider the one-dimensional equation $x'=f(x)$. Assume the $C^1$ function $f:\R\to\R$ is never 0. Assume $X(t)$ is defined for all $t$. Then  for $V(x) := \sin(x)$, conditions ($A'_1$) and ($A'_2$) are satisfied since $\dt V(x) = \cos(x) f(x)$ and $\ddt V(x) = [-\sin(x) f(x) + \cos(x)f'(x)]f(x)$. When $\dt V(x) = 0$ we have $\ddt V(x) = -\sin(x)(f(x))^2 \neq 0$. Hence $(A'_2)$ is satisfied. 
By (iii), we conclude $X(t)$ has no limit points \ie, $X(t) \to \pm \infty$ as $t \to \infty$.

The proof is based on describing each connected component $\cQ$ of $\{X \in U:\dt V(X) =0\}$. It is shown that each such component $\cQ$ separates ${\cP}$ into two pieces, and each trajectory can pass through each $\cQ$ at most once.  Our conditions here are more general in some ways than those in \cite{yorke1970theorem},
but the proof there is easily adapted to our case. 
For example, here the domain  $U$ of the differential equation is only required to be simply connected; it is not assumed to be invariant.

{\bf Possible extensions}. 
The  die-out Lyapunov functions we use for 
Eq.~\eqref{equationX*-ndim} have the form
$\Lambda_{\nu} (X) := \sum_i \nu_i \ln x_i.$
It employs $\ln x_i$ for each $i$ because the left side of the equation involves $\frac{x_i'}{x_i}$ which is the derivative of $\ln x_i$.
So the reader may wish to extend the ideas and theorems here to the case where {\it some} or all of the  $\frac{x_i'}{x_i}$ are replaced by the simpler
$x_i'$. 
The corresponding terms in  die-out Lyapunov functions would be changed to 
$\nu_i x_i$ since the derivative of $x_i$ is $x_i'$. The domain of such a variable would be $\R$ instead of $(0,\infty)$. We leave it to the student or researcher to see what kinds of theorems can be created for such a hybrid system.

{\black Many systems of ordinary differential equations will have some coordinates dying out asymptotically. Most writers then write the equations for the remaining coord\-inates. Here, in some special cases, we have shown how to conclude that some coordinates die out. We hope that our effort here will encourage others to pursue systems where coordinates die out.}



\section{Acknowledgements}
The authors are grateful to  Thomas Breunung, Kathleen Hoffman, Bradford E. Peercy, and Yoshitaka Saiki whose careful reading and detailed comments identified significant shortcomings in early drafts and 
significantly  improved the clarity and organization of the manuscript. The authors also thank Timothy Sauer,  Sana Jahedi, Ioannis Kevrekidis for fruitful discussions. We appreciate that it can be quite difficult to find careful readers of a manuscript and that different readers with different perspectives will identify different problems.
NA was supported by NSF-NIH (DMS-NIGMS) Grant No. 1953423. 

\newpage
\begin{section}{Appendix}
\begin{table}[H]
\caption{List of variables in four-dimensional Lotka Volterra model, Fig.~\eqref{fig:3preds1prey-dynamics}, $i,j = 1, \ldots, 4$. 
These parameter choices are meant to provide a purely mathematical example to demonstrate the ideas of the paper. We make no claim that this represents an actual biological ecosystem.
} 
\label{table:Symbol2}
\centering 
\begin{tabular}{l c r} 
 Symbol & Variable Name& Initial Value
\\ [0.5ex] 
\hline
  $x_1$ &  Species (1) & 60   \\
  $x_2, x_3, x_4$ &  Species (2,3,4) & 40 \\
 \hline
 $x_i$ &  Species &  \\
  $c_j$ &  Net birth or death rate of species &    \\
  $s_{ij}$ &  Trophic coefficient &    \\
\end{tabular}  
\begin{tabular}{l c r} 
\hline
 Symbol & Parameter Name & Typical Value
\\ [0.5ex]  
\hline   
  $c_1$ & Birth rate of prey & $1.05$ \\
  $c_2$ & Death rate of the first predators & $0.29$  \\
  $c_3$ & Death rate of the first predators & $0.3$ \\
  $c_4$ & Death rate of the first predators & $0.31$\\
  $s_{11}$ & Rate of self-limiting of prey & $0.002$\\
  $s_{12}$ & First predator consumption & $0.008$ \\
  $s_{13}$ & Second predator consumption & $0.0075$\\
  $s_{14}$ & Third predator consumption & $0.006$ \\
  $s_{21}$ & Rate of change of first predator & $0.0023$\\
   & due to the presence of prey & \\
  $s_{31}$ & Rate of change of second predator & $0.0035$ \\
   & due to the presence of prey & \\
  $s_{41}$ & Rate of change of third predator & $0.003$\\
   & due to the presence of prey & \\
   \hline
   \end{tabular}  
\end{table}  
\end{section}
 \bibliographystyle{abbrvnat}
\bibliography{REF}

\end{document}